\documentclass[pdflatex,sn-mathphys-num]{sn-jnl}

\usepackage{graphicx}%
\usepackage{multirow}%
\usepackage{amsmath,amssymb,amsfonts}%
\usepackage{amsthm}%
\usepackage{mathrsfs}%
\usepackage[title]{appendix}%
\usepackage{xcolor}%
\usepackage{textcomp}%
\usepackage{manyfoot}%
\usepackage{booktabs}%
\usepackage{algorithm}%
\usepackage{algorithmicx}%
\usepackage{algpseudocode}%
\usepackage{listings}%

\usepackage{mathrsfs} 
\usepackage{enumitem,lineno}
\usepackage{graphicx}

\usepackage[T1]{fontenc}
\usepackage{amsfonts,amsmath,amssymb}
\usepackage{tikz,comment}
\usetikzlibrary{positioning, calc, trees, decorations.markings, patterns, arrows, arrows.meta}
\usepackage{diagmac2}

\theoremstyle{thmstyleone}%
\newtheorem{Theorem}{Theorem}[section]
\newtheorem{Lemma}[Theorem]{Lemma}
\newtheorem{Proposition}[Theorem]{Proposition}
\newtheorem{Corollary}[Theorem]{Corollary}
\newtheorem{Conjecture}[Theorem]{Conjecture}
\newtheorem{Problem}[Theorem]{Problem}

\theoremstyle{thmstyletwo}%
\newtheorem{Example}[Theorem]{Example}
\newtheorem{Remark}[Theorem]{Remark}

\theoremstyle{thmstylethree}%
\newtheorem{Definition}[Theorem]{Definition}

\theoremstyle{thmstyletwo}%

\theoremstyle{thmstylethree}%

\begin{document}

\title[The Gomory-Hu inequality and trees]{The Gomory-Hu inequality and trees}


\author[1,2]{\fnm{Oleksiy} \sur{Dovgoshey}}\email{oleksiy.dovgoshey@gmail.com}

\author*[3]{\fnm{Olga} \sur{Rovenska}}\email{rovenskaya.olga.math@gmail.com}

\affil[1]{\orgdiv{Department of Theory of Function}, \orgname{Institute of Applied Mathematics and Mechanics of NASU},  \city{Sloviansk}, \postcode{84100}, \country{Ukraine}}

\affil[2]{\orgdiv{Department of Mathematics and Statistics}, \orgname{University of Turku}, \city{Turku}, \postcode{20014},  \country{Finland}}

\affil*[3]{\orgdiv{Department of Mathematics and Modeling}, \orgname{Donbas State Engineering Academy},  \city{Kramatorsk}, \postcode{84313},  \country{Ukraine}}


\abstract{Let $G=(V,E)$ be a finite connected graph with vertex set $V$ and edge set $E$, and let $U(G)$ be the set of all ultrametric spaces $(V,d_l)$ generated by vertex labelings $l\colon V \to \mathbb R^+$. We prove that the inequality
\[
|D(V)| \le |E| + 1
\]
holds for all $(V,d_l) \in U(G)$, where $D(V)$ is the distance set of $(V,d_l)$. The necessary and sufficient conditions under which the above inequality turns to an equality are found. Moreover, we prove that each connected graph with non-negative vertex labeling generates a pseudoultrametric space and find some  sufficient conditions under which this space is ultrametric.
}

\keywords{distance set, pseudoultrametric space, tree,  ultrametric space, vertex labeling.}


\pacs[MSC Classification]{Primary 54E35. Secondary 05C05.}

\maketitle

\section{Introduction}

The theory of ultrametric spaces is closely connected with problems of applied mathematics, physics, linguistics, psychology, and computer science, see, for example,  \cite{RTV86,Mur04a,Lem00,Mur04b,GSP17,SedaHitzler2008,PriessCrampeRibenboim2000}.

By studying the flows in networks, R.~Gomory and T.~Hu~\cite{GH1961S} deduced a statement that can be formulated as follows: If $(X,d)$ is a finite ultrametric space with  {\it distance set}
\[
D(X):=\{d(x,y):x,y\in X\},
\]
then the inequality
\begin{equation}
    \label{dhlu}
|D(X)| \le |X|
\end{equation}
holds.

 We will call inequality \eqref{dhlu} the {\it Gomory--Hu inequality}.

\begin{Definition}\label{kahkas}
    A finite ultrametric space $(X,d)$ is said to be a ${\bf GH}$-space if the equality
\(
|D(X)| = |X|
\)
holds. In what follows we denote by ${\bf GH}$ the class of all ${\bf GH}$-spaces.
\end{Definition}

Two descriptions of the class ${\bf GH}$ were obtained in terms of the representing trees of finite ultrametric spaces and, respectively, so-called diametrical graphs of such spaces (see \cite{PD2014JMS}, Theorem 2.3 and Theorem 3.1). 
A criterion of $(X,d) \in {\bf GH}$ in terms of weighted Hamiltonian cycles and weighted Hamiltonian paths are found in Theorem 2.5 of  paper \cite{DPT2015}. The number of non-isometric spaces $(X,d) \in {\bf GH}$ are also calculated in \cite{DPT2015}.

The finite ultrametric spaces $(X,d)$ with $|X| \ge 3$ for which every self-isometry $X \to X$ has at least $|X|-2$ fixed points  give us an interesting example of ${\bf GH}$-spaces. (See Theorem 2.3 of \cite{PD2014JMS} and Theorem 3.4 of \cite{DPT2017FPTA}.)

At the present paper we
investigate the ultrametric spaces generated by 
  connected  graphs $G$ with given vertex labelings $l : V(G)\to \mathbb{R}^+$. 

The ultrametric spaces generated by labeled trees were introduced in \cite{Dov2020TaAoG}. Some  results describing the metric and topological properties of such spaces can be found in \cite{DCR2025OJAC,DK2024DLPSSAG,DK2022LTGCCaDUS,DR2026MDPI,DR2026PIGC,DR2026AnComb,DR2025USGbLSG,DV2025JMS}.

The present paper is organized as follows. In Section \ref{sec2} we collect together some definitions and facts related to ultrametric spaces and graphs. 

The main results of the paper are presented in Section \ref{sec3}.
Theorem~\ref{tteo5}, the first theorem of Section~\ref{sec3}, shows that every connected labeled graph $G$ generates a pseudoultrametric on $V(G)$. Moreover, we show that this pseudoultrametric is an ultrametric if $G$ is locally finite. The necessary and sufficient conditions under which an ultrametric space generated by finite labeled tree belongs to the class $\bf GH$ are given in Theorem~\ref{tteo6}.
Theorem~\ref{mak} contains a partial generalization of Theorem~\ref{tteo6} to the case of ultrametric spaces generated by arbitrary finite connected labeled graphs. In Proposition~\ref{lero} we prove that every finite connected labeled graph $G(l)$ 
generates a $\bf GH$-space for 
 some suitable vertex labeling $l$.
Example~\ref{hlo} describes the vertex labelings on complete finite   graphs  which generate $\bf GH$-spaces.

The final Section \ref{sec4} contains new conjectures and some open problems.

\section{Preliminaries}\label{sec2}

Recall some necessary definitions from the theory of metric spaces and the graph theory. 

Let $X$ be a non-empty set. An {\it ultrametric} on  $X$ is a function $d : X \times X \to \mathbb{R}^+$, $\mathbb{R}^+ := [0,\infty)$, such that for all $x,y,z \in X$:
\begin{enumerate}
\item[{\it(i)}] $d(x,y)=d(y,x)$,
\item[{\it(ii)}]  $d(x,y)=0 \iff x=y$,
\item[{\it(iii)}]  $d(x,y)\le \max\{d(x,z),d(z,y)\}$.
\end{enumerate}

Inequality $(iii)$ is often called the {\it strong triangle inequality}.

If we replace condition $(ii)$ by the weaker condition
   $ d(x,x)=0,$
then $d$ is called a {\it pseudoultrametric} on $X$.

Just as in the case of an ultrametric space, we introduce  the distance set $D(X)$ of a pseudoultrametric space $(X,d)$ as
\[
D(X) := \{ d(x,y) : x,y \in X \}.
\]

The following proposition directly follows from Theorem 14 in Chapter 4 of book \cite{Kelley1965}.

\begin{Proposition}\label{dxo} Let $(X,d)$ be a pseudoultrametric space. Then there exists $Y \subseteq X$ such that the function
\[
\rho:=d|_{Y\times Y}
\]
is an ultrametric on $Y$ and the equality
\[
D(X) = D(Y)
\]
holds, where $D(X)$ and, respectively, $D(Y)$ are the distance sets of the pseudoultrametric space $(X,d)$ and the ultrametric space $(Y,\rho)$.
\end{Proposition}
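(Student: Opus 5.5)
The plan is to use the metric identification induced by $d$. Define a relation $\sim$ on $X$ by $x \sim y$ if and only if $d(x,y)=0$.

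First I will check that $\sim$ is an equivalence relation. Reflexivity follows from $d(x,x)=0$, and symmetry follows from property \emph{(i)}. For transitivity, suppose $d(x,z)=0$ and $d(z,y)=0$. The strong triangle inequality \emph{(iii)} gives
\[
d(x,y)\le\max\{d(x,z),d(z,y)\}=0 .
\]

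The key step is to show that $d$ is constant on pairs of equivalence classes. Let $x\sim x'$ and $y\sim y'$. By \emph{(iii)},
\[
d(x,y)\le\max\{d(x,x'),d(x',y)\}=d(x',y),
\]
and by symmetry also $d(x',y)\le d(x,y)$, so $d(x,y)=d(x',y)$. Applying the same argument in the second variable gives $d(x',y)=d(x',y')$. Hence $d(x,y)=d(x',y')$.

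Next I choose a set $Y\subseteq X$ of representatives containing exactly one point of each equivalence class. This uses the axiom of choice when $X$ is infinite; for the finite graphs considered later no choice is needed. Set $\rho=d|_{Y\times Y}$.

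Properties \emph{(i)} and \emph{(iii)} hold for $\rho$ because they are inherited from $d$. For \emph{(ii)}, let $y_1,y_2\in Y$ with $\rho(y_1,y_2)=0$. Then $y_1\sim y_2$, and since $Y$ contains only one representative per class, $y_1=y_2$. So $\rho$ is an ultrametric on $Y$.

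Finally, I verify $D(X)=D(Y)$.
\begin{itemize}
\item The inclusion $D(Y)\subseteq D(X)$ is immediate, since $Y\subseteq X$.
\item For the reverse inclusion, take any $x,y\in X$ and let $x',y'\in Y$ be the representatives of their classes. By the constancy shown above, $d(x,y)=d(x',y')=\rho(x',y')\in D(Y)$.
\item The value $0$ needs no separate treatment. Because $Y\neq\emptyset$, we have $0=\rho(y,y)\in D(Y)$.
\end{itemize}

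The only step with real content is the constancy of $d$ on equivalence classes. It is a direct two-line application of the strong triangle inequality, so I do not expect any genuine obstacle. This is the ultrametric version of the standard metric identification in Kelley's Theorem 14.
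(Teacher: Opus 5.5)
Your proof is correct and follows essentially the same route as the paper. The paper just cites Kelley's metric identification (Theorem 14 in Chapter 4), and you carry that identification out explicitly. You use a set of representatives $Y$ instead of the quotient, so $\rho=d|_{Y\times Y}$ is a restriction of $d$ and automatically inherits the strong triangle inequality, which is exactly the form the statement requires.
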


Let us turn now to the graph theory.
  Recall that a {\it graph} is a pair $(V,E)$ consisting of a non-empty set $V$ and a (possibly empty) set $E$ whose elements are unordered pairs of distinct points from $V$.
For a graph $G=(V,E)$, the sets $V=V(G)$ and $E=E(G)$ are called the {\it set of vertices} and the {\it set of edges}, respectively. If $x$ and $y$ are vertices of $G$ and $\{x,y\}\in E(G)$, then we say that $x$ and $y$ are {\it adjacent} in $G$. A graph $G$ is said to be {\it empty} if the equality $E(G)=\emptyset$ holds. A graph $H$ is called a {\it subgraph} of a graph $G$, $H \subseteq G$, if the inclusions
\[
V(H) \subseteq V(G) \quad \text{and} \quad E(H) \subseteq E(G)
\] 
hold. A graph $G$ is said to be {\it finite} if this graph contains a finite number of vertices, $|V(G)| < \infty$.

Let $v$ be a vertex of a graph $G$. The cardinal number of the set
\[
\{u \in V(G) : \{u,v\} \in E(G)\}
\]
is the {\it degree} of $v$. The degree of $v\in V(G)$ will be denoted by $\delta_G(v)$,
\begin{equation}
    \label{mmks}
\delta_G(v) := \left|\{u \in V(G) : \{u,v\} \in E(G)\}\right|.
\end{equation}

A graph $G$ is called {\it locally finite} if the degree $\delta_G(v)$ is finite,
\[
\delta_G(v) <\infty,
\]
for every $v\in V(G)$.

The next theorem is a special case of Theorem 1.2 of book \cite{BM2008}.

\begin{Theorem}\label{rghki} Let $G$ be a finite graph. Then the equality
\[
2|E(G)| = \sum_{v \in V(G)} \delta_G(v)
\]
holds.
\end{Theorem}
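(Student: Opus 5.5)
The statement to prove is the handshake lemma for finite graphs, Theorem~\ref{rghki}: $2|E(G)| = \sum_{v\in V(G)}\delta_G(v)$. I will count the set of incident pairs
\[
I := \{(v,e) \in V(G)\times E(G) : v \in e\}
\]
in two ways. Since $G$ is finite, $E(G)$ is a finite set of two-element subsets of $V(G)$, so $I$ is finite and both counts are legitimate finite sums.

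\emph{Counting by edges.} Every edge $e=\{x,y\}$ consists of two distinct vertices, so there are exactly two pairs in $I$ with second coordinate $e$. Hence $|I| = \sum_{e\in E(G)} 2 = 2|E(G)|$.

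\emph{Counting by vertices.} For a fixed $v$, consider the pairs in $I$ with first coordinate $v$. The map $(v,\{u,v\}) \mapsto u$ sends these pairs bijectively onto $\{u\in V(G) : \{u,v\}\in E(G)\}$. The inverse is $u\mapsto (v,\{u,v\})$; it is well defined because edges are unordered pairs of distinct points, so there are no loops and no multiple edges. By \eqref{mmks}, the number of such pairs is therefore $\delta_G(v)$. Summing over $v$ gives $|I| = \sum_{v\in V(G)} \delta_G(v)$.

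Comparing the two expressions for $|I|$ yields the claimed equality. The only point that needs care is the bijection in the vertex count, which depends on the paper's definition of a graph as simple. There is no real obstacle, and alternatively one may simply cite Theorem~1.2 of \cite{BM2008}, as the paper does.
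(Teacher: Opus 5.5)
Your double count of the incidence pairs $(v,e)$ with $v\in e$ is correct and complete. The bijection in the vertex count rests exactly on the paper's convention that edges are two-element subsets of $V(G)$. The paper gives no proof of its own: it cites Theorem~1.2 of \cite{BM2008}, where the handshake lemma is proved by essentially this same double counting, phrased as row and column sums of the incidence matrix, so your argument matches the intended one.
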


A {\it path} is a graph $P=(V,E)$ of the form
\begin{equation}
    \label{gasl}
V=\{x_0,x_1,\dots,x_k\}, \quad E=\{\{x_0,x_1\},\dots,\{x_{k-1},x_k\}\},
\end{equation}
where all $x_i$ are distinct and $k \geq 1$. If \eqref{gasl} holds, then we write $P=(x_0,\ldots,x_k)$ and say that $P$ is a path joining the vertices $x_0$ and  $x_k$.  
A graph $G$ is called {\it connected} if for any two distinct vertices $u,v\in V(G)$ there exists a path $P\subseteq G$ joining these vertices. In particular, any graph $G$ with $|V(G)|=1$ is connected.
A finite graph $C$ is a {\it cycle} if $|V(C)|\ge 3$ and there exists an enumeration $v_1,v_2,\dots,v_n$ of its vertices such that
\[
\{v_i,v_j\}\in E(C) \iff (|i-j|=1 \text{ or } |i-j|=n-1).
\]
 A connected graph without cycles is called a {\it tree}.

\begin{Proposition}\label{hfdsxbj} Let $G$ be a finite connected graph. Then the following conditions are equivalent.
\begin{enumerate}
\item[(i)] $G$ is a tree.
\item[(ii)] Any two distinct vertices of $G$ are joined by unique path in $G$.
\item[(iii)] The equality
\[
|V(G)| = 1 + |E(G)|
\]
holds.
\end{enumerate}
\end{Proposition}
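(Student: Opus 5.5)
I will prove the equivalences in the cycle (i)$\Rightarrow$(ii)$\Rightarrow$(iii)$\Rightarrow$(i). Two standard facts about a finite connected graph $G$ will be used throughout:
\begin{enumerate}
\item[(a)] If $e$ lies on a cycle of $G$, then $G-e$ is still connected.
\item[(b)] If $G$ has at least two vertices and is acyclic, then it has a vertex of degree $1$.
\end{enumerate}
For (b), take a longest path $(x_0,\dots,x_k)$ in $G$. Every neighbour of $x_0$ lies on the path, by maximality. A neighbour $x_j$ with $j\ge 2$ would close a cycle, so $\delta_G(x_0)=1$.

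For (i)$\Rightarrow$(ii), suppose two distinct paths $P$ and $Q$ join $u$ and $v$. Walk along $P$ from $u$ to the first vertex after which $P$ and $Q$ diverge. Then continue along $P$ to the first subsequent vertex that also lies on $Q$. The two segments between these vertices, one from $P$ and one from $Q$, share only their endpoints, so together they form a cycle. This contradicts (i).

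For (ii)$\Rightarrow$(iii), I induct on $|V(G)|$; the case $|V(G)|=1$ is trivial. Condition (ii) forces $G$ to be acyclic, because two vertices on a common cycle are joined by two distinct paths along it. By (b), $G$ has a leaf $x$. The graph $G-x$ still satisfies (ii): a path between two vertices other than $x$ never passes through $x$, since $\delta_G(x)=1$. Removing $x$ deletes exactly one vertex and one edge, so the induction hypothesis gives $|V(G)|=1+|E(G)|$.

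For (iii)$\Rightarrow$(i), assume $G$ contains a cycle. Using (a), repeatedly delete cycle edges while keeping the graph connected. This stops at an acyclic connected spanning subgraph $T$, which is a tree, and $T$ has strictly fewer edges than $G$. By (i)$\Rightarrow$(ii)$\Rightarrow$(iii) applied to $T$, we get $|E(T)|=|V(G)|-1$. Hence $|E(G)|>|V(G)|-1$, contradicting (iii).

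The only step that needs real care is the path-splicing argument in (i)$\Rightarrow$(ii), where one must check that the constructed subgraph really is a cycle in the paper's sense, with distinct vertices and at least $3$ of them. The rest is routine; alternatively, one could simply cite this as a classical result, for example from \cite{BM2008}.
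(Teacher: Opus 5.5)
Your proof is correct. The paper does not prove this proposition itself; it only refers to Theorem 1.5.2 and Corollary 1.5.3 of \cite{Diestel2017}. Your argument is essentially the standard textbook one behind those results: splice two distinct $u$--$v$ paths into a cycle, count the edges of a tree by removing a leaf (your longest-path lemma (b)), and get the converse by comparing $G$ with a spanning tree.

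What your version adds is that it is self-contained. In (iii)$\Rightarrow$(i) you build the spanning tree by deleting cycle edges rather than quoting Theorem \ref{kiii}. That step is the same edge-count comparison the paper later uses in Corollary \ref{letp}, but it needs nothing from outside.

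The one fact you use without proof is (a), and it takes a single line. If $e=\{a,b\}$ lies on a cycle $C$, any walk in $G$ through $e$ can be rerouted along the path $C-e$ from $a$ to $b$, and every walk between distinct vertices contains a path.

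The splicing step you flagged also goes through as sketched. Write $P=(p_0,\dots,p_k)$ and $Q=(q_0,\dots,q_s)$, both from $u$ to $v$.
\begin{enumerate}
\item[$\bullet$] Let $i$ be maximal with $p_t=q_t$ for all $t\le i$. Then $i<\min\{k,s\}$, since $P\ne Q$ and both paths end at $v$.
\item[$\bullet$] Let $j>i$ be minimal with $p_j\in V(Q)$, say $p_j=q_m$. Then $m>i$, because $p_j\notin\{p_0,\dots,p_i\}=\{q_0,\dots,q_i\}$.
\item[$\bullet$] Since $p_{i+1}\ne q_{i+1}$, the case $j=m=i+1$ is excluded.
\end{enumerate}
So the segments $(p_i,\dots,p_j)$ and $(q_i,\dots,q_m)$ meet only in their endpoints. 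Their union is a cycle in the paper's sense, with $(j-i)+(m-i)\ge 3$ vertices.
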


See, for example, Theorem 1.5.2 and Corollary 1.5.3 in~\cite{Diestel2017}.

\begin{Theorem}\label{kiii}
     Let $G$ be a connected graph. Then there exists a tree $T \subseteq G$ such that
\[
V(T) = V(G).
\]
\end{Theorem}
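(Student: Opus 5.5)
The statement is that every connected graph $G$, not necessarily finite, contains a spanning tree. I would prove it with Zorn's lemma applied to acyclic subgraphs containing a fixed vertex. Fix $v_0 \in V(G)$. Let $\mathcal{F}$ be the family of all trees $T \subseteq G$ with $v_0 \in V(T)$, partially ordered by inclusion: $T_1 \le T_2$ if and only if $V(T_1)\subseteq V(T_2)$ and $E(T_1)\subseteq E(T_2)$. This family is non-empty, since the one-vertex graph $(\{v_0\},\emptyset)$ is a tree.

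\textbf{Chains have upper bounds.} Let $\{T_i\}_{i\in I}$ be a chain in $\mathcal{F}$ and put $T^* = \bigl(\bigcup_i V(T_i), \bigcup_i E(T_i)\bigr)$. Each edge of $T^*$ lies in some $T_i$, and its endpoints lie in $V(T_i)$, so $T^*$ is a subgraph of $G$. It is connected: for $u,w \in V(T^*)$ we have $u\in V(T_i)$ and $w\in V(T_j)$, and both lie in the larger of $T_i$ and $T_j$, which is connected. It is acyclic: a cycle is a finite graph, so its finitely many edges all lie in a single $T_k$ because the family is a chain, and $T_k$ has no cycles. Hence $T^*\in\mathcal{F}$, and Zorn's lemma yields a maximal element $T$.

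\textbf{The maximal tree is spanning.} Suppose some $w\in V(G)\setminus V(T)$ exists. Since $G$ is connected, there is a path $(v_0=x_0,\dots,x_k=w)$ in $G$. Let $j$ be the least index with $x_j\notin V(T)$; then $j\ge 1$ and $x_{j-1}\in V(T)$. Adding the vertex $x_j$ and the edge $\{x_{j-1},x_j\}$ to $T$ gives a connected graph. It has no cycle, because any cycle would have to pass through $x_j$, which has degree $1$, whereas every vertex of a cycle has degree $2$. This contradicts the maximality of $T$, so $V(T)=V(G)$.

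The only delicate step is the chain argument, specifically that acyclicity passes to unions. It relies on cycles being finite objects, which holds here because the paper defines a cycle as a finite graph. For finite $G$ the result also follows without Zorn's lemma, by repeatedly deleting an edge lying on a cycle; such a deletion preserves connectivity.
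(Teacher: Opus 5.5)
Your argument is correct and complete. The paper does not prove this statement itself; it cites Proposition~14 in Chapter~1 of Serre's \emph{Trees}, where the result is obtained the same way: the union of a chain of subtrees is a subtree, Zorn's lemma gives a maximal subtree, and a maximal subtree of a connected graph contains every vertex, since otherwise a pendant edge could be attached. Your write-up phrases that argument in the paper's own definitions of graph and cycle, where the finiteness of cycles makes the chain step immediate.
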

For the proof see, for example, Proposition 14 in Chapter 1 of book  \cite{SerreTrees1980}.

Proposition \ref{hfdsxbj} and Theorem \ref{kiii} imply the following corollary.

\begin{Corollary}\label{letp} Let $G$ be a finite connected graph. If the inequality
\begin{equation}
    \label{jako1}
1 + |E(G)| \le |V(G)|
\end{equation}
holds, then $G$ is a tree.
\end{Corollary}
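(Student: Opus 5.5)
By Theorem \ref{kiii}, there is a tree $T \subseteq G$ with $V(T) = V(G)$. Since $T$ is a subgraph of $G$, we have $E(T) \subseteq E(G)$, and hence $|E(T)| \le |E(G)|$.

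The key step is to compare the numbers of edges of $T$ and $G$ using the hypothesis.

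\begin{itemize}
\item Since $T$ is a tree, and $T$ is finite because $V(T)=V(G)$ is finite, condition $(iii)$ of Proposition \ref{hfdsxbj} gives $|V(T)| = 1 + |E(T)|$.
\item Inequality \eqref{jako1} states $1+|E(G)| \le |V(G)|$. Since $|V(G)| = |V(T)| = 1+|E(T)|$, this becomes $|E(G)| \le |E(T)|$.
\end{itemize}

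Combining this with $|E(T)| \le |E(G)|$ gives $|E(T)| = |E(G)|$. As both edge sets are finite and $E(T)\subseteq E(G)$, we get $E(T)=E(G)$. Together with $V(T)=V(G)$, this shows $G = T$.

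Alternatively, once we know $|V(G)| = 1+|E(G)|$, we can apply Proposition \ref{hfdsxbj} directly to the finite connected graph $G$.

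Either way, $G$ is a tree. There is no substantive obstacle: the only point to check is that the set inclusion $E(T)\subseteq E(G)$ together with equal finite cardinalities forces $E(T)=E(G)$.
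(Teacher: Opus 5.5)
Your proof is correct and follows essentially the same route as the paper: take a spanning tree $T\subseteq G$ from Theorem~\ref{kiii}, use $|V(T)|=1+|E(T)|$ and $|E(T)|\le|E(G)|$, and combine these with the hypothesis. The paper finishes exactly as in your alternative, deducing $|V(G)|=1+|E(G)|$ and applying Proposition~\ref{hfdsxbj} to $G$. Your main line instead concludes $E(T)=E(G)$, hence $G=T$; this is an equally valid minor variation that only needs the direction $(i)\Rightarrow(iii)$ of Proposition~\ref{hfdsxbj}, applied to $T$.
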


\begin{proof}
Let inequality \eqref{jako1} hold. By Theorem \ref{kiii} there exists a tree $T$ such that
\[
E(T) \subseteq E(G) \quad \text{and} \quad V(T) = V(G).
\]
Consequently, we have the inequality
\begin{equation}
    \label{jako2}
|E(T)| \le |E(G)| 
\end{equation}
and the equality
\begin{equation}
    \label{jako3}
|V(T)| = |V(G)|. 
\end{equation}
Inequality \eqref{jako2}, Proposition \ref{hfdsxbj}, and equality \eqref{jako3} imply
\[
|V(G)| = |V(T)| = 1 + |E(T)| \le 1 + |E(G)|.
\]
Thus the inequality
\[
|V(G)| \le 1 + |E(G)|
\]
holds. The last inequality and \eqref{jako1} imply the equality
\[
|V(G)| = 1 + |E(G)|.
\]
Consequently, $G$ is a tree by Proposition \ref{hfdsxbj}.
\end{proof}

A tree $T$ may have a distinguished vertex $r$ called the {\it root}; in this case $T$ is called a {\it rooted tree} and we write $T=T(r)$.

If $T = T(r)$ is a rooted tree, $v$ is a vertex of $T$, such that $v \neq r$, 
then the {\it level} of $v$ is the number $\operatorname{lev}(v):=|E(P)|$ where $P$ is a path joining $v$ with $r$.

Moreover, the root $r$ of $T$ has, by definition,  the zero level $\operatorname{lev}(r):=0$.

 A non-empty graph $G=(V,E)$ together with a function $w:E\to \mathbb{R}^+$ is called a {\it weighted graph}, and $w$ is called a {\it weight}. The weighted graphs will be denoted by $G(w)$.

\begin{Definition} Let $G(w)$ be a weighted graph.  A weight $w$ is said to be {\it ultrametrizable (pseudoultrametrizable)} if there exists an ultrametric (pseudoultrametric) $\rho_w : V(G) \times V(G) \to \mathbb{R}^+$ such that 
\begin{equation}
\label{kaske}
    w(\{x,y\})=\rho_w(x,y)
\end{equation}
 for each $\{x,y\} \in E(G)$.
\end{Definition}

The next theorem directly follows from Theorems 1--3 of paper \cite{DP2013SM}.

\begin{Theorem}\label{kasya} Let $G(w)$ be a  weighted graph. The weight $w$ is pseudoultrametrizable if and only if for any cycle $C \subseteq G$ there exist at least two different edges $e_1, e_2 \in E(C)$ such that
\begin{equation}
    \label{samf}
w(e_1) = w(e_2) = \max_{e \in E(C)} w(e).
\end{equation}
If $G$ is connected, $w$ is  pseudoultrametrizable, then the function $\rho_w : V(G) \times V(G) \to \mathbb{R}^+$ defined for all $x,y \in V(G)$ as 
\begin{equation}
    \label{kjvdde}
\rho_w(x,y) :=
\begin{cases}
0, & \text{if } x = y,\\[6pt]
\inf\limits_{P \in \mathcal{P}_{x,y}} \left( \max\limits_{e \in E(P)} w(e) \right), & \text{if } x \ne y,
\end{cases}
\end{equation}
where $\mathcal{P}_{x,y}$ is the set of all paths joining  $x$ and $y$ in $G$, is a pseudoultrametric on $V(G)$ and equality \eqref{kaske} holds.
\end{Theorem}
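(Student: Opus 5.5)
The plan is to handle necessity with the strong triangle inequality along a cycle. Sufficiency will first be proved for connected $G$ using formula \eqref{kjvdde}, and then reduced to the connected case.

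\emph{Necessity.} Suppose $\rho$ is a pseudoultrametric on $V(G)$ satisfying \eqref{kaske}, and suppose some cycle $C\subseteq G$ has a unique edge $e_1=\{x,y\}$ of maximal weight. Removing $e_1$ from $C$ leaves a path $(x=v_0,v_1,\dots,v_k=y)$. Every edge of this path has weight strictly less than $w(e_1)$. Iterating the strong triangle inequality along the path gives
\[
\rho(x,y)\le \max_{1\le i\le k}\rho(v_{i-1},v_i)=\max_{1\le i\le k} w(\{v_{i-1},v_i\})<w(e_1)=\rho(x,y),
\]
which is a contradiction. Hence the maximum on every cycle is attained at least twice, which is \eqref{samf}.

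\emph{Sufficiency, connected case.} Let $G$ be connected and assume the cycle condition; define $\rho_w$ by \eqref{kjvdde}. Connectedness makes every $\mathcal P_{x,y}$ non-empty, so $\rho_w$ is finite and non-negative. Also $\rho_w(x,x)=0$, and $\rho_w$ is symmetric because reversing a path gives a path. For the strong triangle inequality, take $P\in\mathcal P_{x,y}$ and $Q\in\mathcal P_{y,z}$. Their concatenation is a walk from $x$ to $z$ (assume $x\ne z$; otherwise there is nothing to prove). Every walk contains a path joining its ends whose edges are among the edges of the walk. Therefore
\[
\rho_w(x,z)\le\max\Bigl\{\max_{e\in E(P)}w(e),\ \max_{e\in E(Q)}w(e)\Bigr\}.
\]
Taking infima over $P$ and $Q$ yields $\rho_w(x,z)\le\max\{\rho_w(x,y),\rho_w(y,z)\}$. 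It remains to verify \eqref{kaske}. For an edge $\{x,y\}$, the one-edge path gives $\rho_w(x,y)\le w(\{x,y\})$. For the reverse inequality, let $P\in\mathcal P_{x,y}$ be any other path. Then $P$ does not contain the edge $\{x,y\}$, so $P$ together with this edge forms a cycle $C$. By \eqref{samf}, the maximum weight on $C$ is attained on at least two edges, so at least one of them lies in $E(P)$. Hence $\max_{e\in E(P)}w(e)\ge\max_{e\in E(C)}w(e)\ge w(\{x,y\})$, and therefore $\rho_w(x,y)=w(\{x,y\})$.

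\emph{General case.} If $G$ is disconnected, choose one vertex in each component and join the chosen vertices by new edges forming a tree (for instance a star), each with weight $0$. Every new edge is a bridge of the enlarged graph $G'$. A cycle never contains a bridge, so the cycles of $G'$ are exactly the cycles of $G$, and the cycle condition still holds. The connected case then yields a pseudoultrametric on $V(G')=V(G)$ that agrees with $w$ on $E(G)$. The only delicate point in the whole argument is the reverse inequality in \eqref{kaske}: one must check that any competing path, together with the edge itself, really forms a cycle. This holds because a path joining $x$ and $y$ other than the single edge $\{x,y\}$ cannot use that edge.
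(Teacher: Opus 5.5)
Your proof is correct. It necessarily differs from the paper, because the paper gives no argument of its own: Theorem~\ref{kasya} is imported directly from Theorems 1--3 of \cite{DP2013SM}.

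You prove everything from scratch, in three steps:
\begin{enumerate}
\item Necessity: you remove the unique heaviest edge of a cycle and run the strong triangle inequality along the remaining path.
\item Sufficiency for connected $G$: you check the axioms for the minimax path function directly. The cycle condition is used exactly once, to show that no competing path undercuts $w(\{x,y\})$ on an edge.
\item The disconnected case: you add zero-weight bridges along a star on component representatives. These bridges create no new cycles.
\end{enumerate}

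The citation is shorter. Your version is self-contained, and it makes visible something the paper's later use obscures. On a connected graph, $\rho_w$ is a pseudoultrametric for \emph{every} weight $w$; the cycle condition is needed only to get $\rho_w=w$ on edges. Consequently, the verification of the cycle condition for $w(\{u,v\})=\max\{l(u),l(v)\}$ in the proof of Theorem~\ref{tteo5} is not actually needed to conclude that $d_l$ is a pseudoultrametric.

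One small omission: in the strong triangle inequality you should also dispose of the cases $x=y$ and $y=z$. There $\mathcal{P}_{x,y}$ or $\mathcal{P}_{y,z}$ is empty, since paths have distinct ends, so your infimum argument does not apply. The inequality is trivial in those cases because $\rho_w(x,x)=0$.
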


\begin{Remark}\label{kaf}The pseudoultrametric defined by formula \eqref{kjvdde} can be treated as an ultrametric analogue of the {\it shortest path} pseudometric. In particular, Theorem \ref{kasya} is an ultrametric modification of Proposition 2.1 from paper \cite{DMV2013AC}.
\end{Remark}

Let us recall now the concept of a {\it labeled graph.}

\begin{Definition} A labeled graph $G(l)$ is a pair $(G,l)$, where $G$ is a graph and $l$ is a mapping defined on the vertex set $V(G)$.
\end{Definition}

We will consider
the labeled graphs
$G(l)$ with non-negative labelings $l\colon V(G) \to \mathbb R^+$ only.

Let $G = G(l)$ be a connected labeled graph. Similarly to \cite{Dov2020TaAoG}, we define a mapping $d_l : V(G) \times V(G) \to \mathbb{R}^+$ as
\begin{equation}
    \label{fdes1}
d_l(x,y) := 
\begin{cases}
0, & \text{if } x = y,\\
\inf\limits_{P\in \mathcal{P}_{x,y}}\left(\max\limits_{v\in V(P)} l(v)\right), & \text{otherwise},
\end{cases}
\end{equation}
where $\mathcal{P}_{x,y}$ is the set of all paths  joining $x$ and $y$ in $G$.

In what follows, we will say that a space $(X,d_l)$ is {\it generated by 
labeled graph} $G (l)$.

If a graph $G$ is a tree, then, by Proposition \ref{hfdsxbj}, for each two distinct $x,y \in V(G)$ there is a unique path $P\in \mathcal{P}_{x,y}$.
So we can rewrite formula \eqref{fdes1} in the form
\begin{equation}
    \label{fdes5}
d_l(x,y) := 
\begin{cases}
0, & \text{if } x = y, \\
\max\limits_{v \in V(P)} l(v), & \text{otherwise}.
\end{cases}
\end{equation}

The following result is a direct corollary of Proposition 3.2 from \cite{Dov2020TaAoG}.

\begin{Theorem}\label{t2t} Let $T = T(l)$ be a labeled tree and let $d_l \colon V(T)\times V(T) \to \mathbb R^+$ be defined by \eqref{fdes5}. Then the function $d_l$ is an ultrametric on $V(T)$ if and only if the inequality
\begin{equation*}
\max\{l(u), l(v)\} > 0
\end{equation*}
holds for every  $\{x,y\}\in V(T)$.
\end{Theorem}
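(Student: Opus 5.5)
The statement is to be read with the condition imposed on every edge $\{u,v\}\in E(T)$; the indices in the displayed formula show that $\{x,y\}\in V(T)$ is a misprint. I would prove the two directions separately, working directly with formula \eqref{fdes5}. I would first dispose of the trivial axioms. The equality $d_l(x,x)=0$ holds by definition. Symmetry holds because the unique path joining $x$ and $y$ is the same graph as the unique path joining $y$ and $x$. Non-negativity holds because $l$ takes values in $\mathbb R^+$.

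\emph{Strong triangle inequality.} This holds for every labeling $l$, and it is the main point of the proof. Let $x,y,z\in V(T)$. If two of them coincide, the inequality is immediate, since one term on the right is $0$ and another equals the left-hand side. So assume they are pairwise distinct, and let $P_{x,y}$, $P_{x,z}$, $P_{z,y}$ be the unique paths. I would show that
\[
V(P_{x,y})\subseteq V(P_{x,z})\cup V(P_{z,y}).
\]
Taking the maximum of $l$ over both sides then gives $d_l(x,y)\le\max\{d_l(x,z),d_l(z,y)\}$.

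For the inclusion, let $H$ be the union of the graphs $P_{x,z}$ and $P_{z,y}$. It is a finite connected subgraph of $T$ containing $x$ and $y$, so it contains some path $P$ joining $x$ and $y$. This $P$ is also a path in $T$, so uniqueness of paths in $T$ gives $P=P_{x,y}$, and hence $V(P_{x,y})\subseteq V(H)$.

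The one delicate point is that Proposition~\ref{hfdsxbj} is stated for finite trees, while $T$ may be infinite. I would handle this by applying the proposition to the finite subgraph $P\cup P_{x,y}$. This subgraph is connected and has no cycles, since $T$ has none, so it is a finite tree. Uniqueness inside it then forces $P=P_{x,y}$. This is the step I expect to need the most care.

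\emph{Sufficiency of the edge condition for $d_l(x,y)=0\Rightarrow x=y$.} Suppose $\max\{l(u),l(v)\}>0$ for every edge $\{u,v\}$, and let $x\neq y$. The path $P_{x,y}$ has at least one edge $\{u,v\}$, so
\[
d_l(x,y)\ge\max\{l(u),l(v)\}>0.
\]
Together with the strong triangle inequality above, this shows that $d_l$ is an ultrametric.

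\emph{Necessity.} Suppose some edge $\{u,v\}\in E(T)$ has $l(u)=l(v)=0$. The unique path joining $u$ and $v$ is $(u,v)$, so $d_l(u,v)=\max\{l(u),l(v)\}=0$ although $u\ne v$. Hence $d_l$ is not an ultrametric.
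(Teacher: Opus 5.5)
Your proof is correct, and your repair of the quantifier to edges $\{u,v\}\in E(T)$ is the right one. Reading it instead as ranging over all pairs of distinct vertices would make the ``only if'' part false: take the path $(a,b,c)$ with labels $0,1,0$.

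The paper does not prove this statement itself; it imports it from Proposition~3.2 of \cite{Dov2020TaAoG}. The nearest argument in the paper is the proof of Theorem~\ref{tteo5}, and it goes differently:
it passes to the edge weight $w(\{u,v\})=\max\{l(u),l(v)\}$ and identifies $d_l$ with $\rho_w$ by Lemma~\ref{fruj}. It then gets the strong triangle inequality from the cycle criterion of Theorem~\ref{kasya}, which is vacuous for a tree. Positivity is proved by a neighborhood argument that requires local finiteness.

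You instead get the strong triangle inequality directly from $V(P_{x,y})\subseteq V(P_{x,z})\cup V(P_{z,y})$. This uses the same uniqueness-of-paths trick the paper uses in the proof of Lemma~\ref{ifff}. Positivity then comes from a single edge of the unique path.

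Your route is self-contained and needs no local finiteness. So it covers trees that are not locally finite, such as infinite stars, for which Theorem~\ref{tteo5} yields only a pseudoultrametric. The paper's weight-based route is what extends to arbitrary connected graphs. There $d_l$ is an infimum over many paths, and positivity can genuinely fail (Example~\ref{rdwwdd}).

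Your treatment of Proposition~\ref{hfdsxbj} for infinite $T$, via the finite tree $P\cup P_{x,y}$, also fills a point the paper passes over when it writes \eqref{fdes5}.

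One cosmetic slip: when $x=y\neq z$ it is the left-hand side that vanishes, not a term on the right. The inequality is trivial there anyway.
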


\section{HG-spaces generated by labeled graphs}\label{sec3}

The following lemma allows us to move from the vertex labelings $l \colon V(G) \to \mathbb R^+$ to the weights $w \colon E(G) \to \mathbb R^+$.

\begin{Lemma}\label{fruj} 
Let $G = G(l)$ be a connected labeled graph,
let $d_l : V(G) \times V(G) \to \mathbb{R}^+$ be generated by $G(l)$, and
let a weight $w : E(G) \to \mathbb{R}^+$ be defined as
\begin{equation}
    \label{00*}
w(\{u,v\}) := \max\{l(u), l(v)\}
\end{equation}
for each $\{u,v\} \in E(G)$. Then the equality
\begin{equation}
    \label{zero}
d_l(x,y)
=
\inf_{P \in \mathcal{P}_{x,y}} \left( \max_{e \in E(P)} w(e) \right)
\end{equation}
holds for all different $x,y\in V(G)$, where $\mathcal{P}_{x,y}$ is the set of all paths joining $x$ and $y$ in $G$.
\end{Lemma}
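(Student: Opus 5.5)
The plan is to show that for each fixed path $P \in \mathcal{P}_{x,y}$ the two quantities under the infimum coincide:
\[
\max_{v\in V(P)} l(v) = \max_{e\in E(P)} w(e).
\]
Since both sides of \eqref{zero} and the definition \eqref{fdes1} take the infimum over the same set $\mathcal{P}_{x,y}$, this pathwise identity immediately gives equality of the infima. It gives it for all distinct $x,y$ at once, because $x\neq y$ is exactly the case handled by the second line of \eqref{fdes1}.

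To prove the pathwise identity, write $P=(x_0,\dots,x_k)$ with $k\ge 1$, $x_0=x$, $x_k=y$, as in \eqref{gasl}. By \eqref{00*},
\[
\max_{e\in E(P)} w(e)=\max_{1\le i\le k}\max\{l(x_{i-1}),l(x_i)\}.
\]
The right-hand side is the maximum of $l$ over the multiset $\{x_0,x_1,x_1,\dots,x_{k-1},x_{k-1},x_k\}$. Its underlying set is exactly $V(P)=\{x_0,\dots,x_k\}$.

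Set inclusion in one direction is clear: every vertex appearing in some edge lies in $V(P)$, so $\max_{e} w(e)\le \max_{v} l(v)$. For the reverse direction, note that every vertex $x_j$ lies in at least one edge of $P$. This is where the hypothesis $k\ge1$ is used: $x_0\in\{x_0,x_1\}$, and for $j\ge1$ we have $x_j\in\{x_{j-1},x_j\}$. Hence $l(x_j)\le w(e)$ for some $e\in E(P)$, which gives $\max_v l(v)\le \max_e w(e)$.

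There is no real obstacle. The only point needing care is that $P$ has at least one edge, so that no vertex is isolated in $P$. This is guaranteed because paths in the paper have $k\ge1$, and because $x\ne y$ ensures that $\mathcal{P}_{x,y}$ consists of such paths. This set is nonempty by connectedness of $G$, so both infima are taken over the same nonempty family.
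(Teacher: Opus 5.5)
Your proposal is correct and follows the paper's proof. Both write $P=(x_0,\dots,x_k)$, establish the pathwise identity $\max_{v\in V(P)} l(v)=\max_{e\in E(P)} w(e)$ by regrouping the maximum over consecutive pairs, and then pass to the infimum over $\mathcal{P}_{x,y}$; your explicit check that every vertex lies in some edge because $k\ge 1$ spells out a step the paper leaves implicit.
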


\begin{proof}
Let $x$ and $y$ be distinct vertices of $G$ and let $P \in \mathcal{P}_{x,y}$. Then the vertices of $P$ can be numbered such that
\begin{equation}
    \label{**1}
P=(v_0,\dots,v_k)
\end{equation}
with $v_0=x$ and $v_k=y$.
Using \eqref{00*} and \eqref{**1} we obtain the equalities
\[
\max_{v \in V(P)} l(v)
=
\max_{0 \le i \le k} l(v_i)
=
\max_{0 \le i \le k-1} \max\{l(v_i), l(v_{i+1})\}
\]
\[
=
\max_{0 \le i \le k-1} w(\{v_i,v_{i+1}\})
=
\max_{e \in E(P)} w(e).
\]
Thus the equality
\begin{equation}
    \label{**2}
\max_{v \in V(P)} l(v) = \max_{e \in E(P)} w(e)
\end{equation}
holds for every $P \in \mathcal{P}_{x,y}$. Now equality \eqref{zero} follows from \eqref{**2}. 
\end{proof}

Equality \eqref{zero} can be rewritten in a much more simple form if $\{x,y\} \in E(G)$.

\begin{Corollary}\label{btoy} Let $G = G(l)$ be a connected labeled graph with  vertex labeling $l : V(G) \to \mathbb R^+,$ and let $d_l : V(G)\times V(G)\to \mathbb R^+$ be generated by $G(l)$. Then the equality
\begin{equation}
    \label{a1}
d_l(x,y)
= \max \{ l(x), l(y) \}
\end{equation}
holds for each edge $ \{x,y\} \in E(G)$.
\end{Corollary}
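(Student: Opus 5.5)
To prove Corollary~\ref{btoy}, I will start from Lemma~\ref{fruj} and show that, for an edge $\{x,y\}\in E(G)$, the infimum in \eqref{zero} is attained by the one-edge path $(x,y)$.

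Fix an edge $\{x,y\}\in E(G)$. Then $x\neq y$, since edges are pairs of distinct vertices, so \eqref{zero} applies. Let $w$ be the weight defined by \eqref{00*}.

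The proof has two inequalities.

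\emph{Upper bound.} The graph $P_0=(x,y)$, with vertex set $\{x,y\}$ and single edge $\{x,y\}$, is a path joining $x$ and $y$. Hence $P_0\in\mathcal{P}_{x,y}$ and
\[
d_l(x,y)\le \max_{e\in E(P_0)} w(e)=w(\{x,y\})=\max\{l(x),l(y)\}.
\]

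\emph{Lower bound.} I will work directly with definition \eqref{fdes1} rather than with the weights. Every path $P\in\mathcal{P}_{x,y}$ contains both endpoints $x$ and $y$ among its vertices. Therefore
\[
\max_{v\in V(P)} l(v)\ge \max\{l(x),l(y)\}.
\]
This lower bound is uniform over all $P\in\mathcal{P}_{x,y}$, so taking the infimum over $P$ gives
\[
d_l(x,y)\ge \max\{l(x),l(y)\}.
\]

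Combining the two inequalities gives \eqref{a1}. There is no real obstacle here. The only point needing care is that the lower bound follows from the endpoints belonging to every path, not from any property of the edge weights.
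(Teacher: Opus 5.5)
Your proof is correct and follows essentially the paper's argument: the lower bound comes from both endpoints lying on every path in $\mathcal{P}_{x,y}$, and the upper bound comes from the two-point path $(x,y)\in\mathcal{P}_{x,y}$. The only cosmetic difference is that you obtain the upper bound through the edge weight of Lemma~\ref{fruj}, whereas the paper evaluates $\max_{v\in V(P)} l(v)$ on that path directly from \eqref{fdes1}.
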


\begin{proof}
Let $x,y$  be distinct vertices of $G$.
It is clear that $x$  and $ y$  are vertices of every path $ P \in \mathcal{P}_{x,y}$.
Hence the inequality
\begin{equation}
    \label{a2}
\inf_{P \in \mathcal{P}_{x,y}} \left( \max_{v \in V(P)} l(v) \right)
\ge \max \{ l(x), l(y) \}
\end{equation}
holds.
Suppose that $x$ and $y$ are adjacent in $G$. Then the two-point path $(x,y)$ belongs to $\mathcal{P}_{x,y}$ and, consequently, we have the inequality
\begin{equation}
    \label{a3}
\inf_{P \in \mathcal{P}_{x,y}} \left( \max_{v \in V(P)} l(v) \right)
\le \max \{ l(x), l(y) \}.
\end{equation}

Inequalities  \eqref{a2} and \eqref{a3} imply the equality
\begin{equation}
    \label{ae}
\inf_{P \in \mathcal{P}_{x,y}} \left( \max_{v \in V(P)} l(v) \right)
= \max \{ l(x), l(y) \}.
\end{equation}

Equality \eqref{a1} follows from \eqref{fdes1} and \eqref{ae}.
\end{proof}

The following theorem is a partial generalization of Theorem \ref{t2t}.

\begin{Theorem}\label{tteo5} Let $G = G(l)$ be a connected  labeled graph  and let  $d_l : V(G)\times V(G) \to \mathbb{R}^{+}$
be generated by $G(l)$.
Then $d_l$ is  a pseudoultrametric on $V(G)$. Moreover, if $G$ is locally finite, then $d_l$ is an ultrametric if and only if the inequality
\begin{equation}\label{eq:10}
\max\{l(u),l(v)\} > 0
\end{equation}
holds for every edge $\{u,v\} \in E(G)$.
\end{Theorem}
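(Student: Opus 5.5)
The plan is to reduce the first claim to Theorem~\ref{kasya} via Lemma~\ref{fruj}, and then to handle the ultrametric criterion by a direct positivity argument at a single vertex.

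\emph{Step 1: $d_l$ is a pseudoultrametric.} Define the weight $w\colon E(G)\to\mathbb R^+$ by \eqref{00*}. By Lemma~\ref{fruj}, $d_l$ coincides with the function $\rho_w$ given by \eqref{kjvdde}. It therefore suffices to show that $w$ is pseudoultrametrizable, using the cycle criterion of Theorem~\ref{kasya}.

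Let $C\subseteq G$ be a cycle, and choose $v\in V(C)$ with $l(v)=\max_{u\in V(C)} l(u)$. Since $|V(C)|\ge 3$, the vertex $v$ is incident to two different edges $e_1=\{v,v_1\}$ and $e_2=\{v,v_2\}$ of $C$. By \eqref{00*}, $w(e_1)=w(e_2)=l(v)$. Every edge $\{a,b\}\in E(C)$ has $w(\{a,b\})\le l(v)$, so \eqref{samf} holds. Since $G$ is connected, Theorem~\ref{kasya} shows that $\rho_w=d_l$ is a pseudoultrametric on $V(G)$.

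\emph{Step 2: necessity of \eqref{eq:10}.} Suppose some edge $\{u,v\}\in E(G)$ has $l(u)=l(v)=0$. Corollary~\ref{btoy} gives $d_l(u,v)=0$ with $u\ne v$, so $d_l$ is not an ultrametric. This direction does not need local finiteness.

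\emph{Step 3: sufficiency, assuming $G$ is locally finite and \eqref{eq:10} holds.} By Step 1 we only need $d_l(x,y)>0$ for distinct $x,y$. Fix such $x,y$ and let $N(x)$ be the set of neighbours of $x$. Since $G$ is connected and has at least two vertices, $N(x)\neq\emptyset$, and $N(x)$ is finite by local finiteness. Set
\[
m_x:=\min_{u\in N(x)}\max\{l(x),l(u)\}.
\]
This is a minimum of finitely many numbers, each positive by \eqref{eq:10}, so $m_x>0$. Every path $P\in\mathcal P_{x,y}$ contains $x$ together with the next vertex $u\in N(x)$ along $P$. Hence $\max_{v\in V(P)}l(v)\ge \max\{l(x),l(u)\}\ge m_x$, and taking the infimum in \eqref{fdes1} gives $d_l(x,y)\ge m_x>0$.

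The only delicate point is Step 3. There the infimum over possibly infinitely many paths must be bounded away from zero. This works only because all paths from $x$ exit through a finite set of edges. Without local finiteness, $m_x$ becomes an infimum over infinitely many positive values and may be $0$, which is why the hypothesis is needed.
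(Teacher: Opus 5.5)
Your proposal is correct and follows essentially the same route as the paper: the cycle criterion of Theorem~\ref{kasya} applied to $w(\{u,v\})=\max\{l(u),l(v)\}$ via Lemma~\ref{fruj}, necessity from Corollary~\ref{btoy}, and sufficiency by bounding every path through the finitely many neighbours of one endpoint. The only difference is cosmetic: your bound $m_x=\min_{u\in N(x)}\max\{l(x),l(u)\}$ argues directly, whereas the paper argues by contradiction, first reducing to $l(u^*)=l(v^*)=0$ and then using $\inf_{u\in N(u^*)}l(u)>0$.
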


 \begin{proof}

Let us define a weight
\(
w : E(G) \to \mathbb{R}^+
\)
as
\begin{equation}
    \label{trean1}
w(\{u,v\}) := \max \{ l(u), l(v) \}
\end{equation}
for each $\{u,v\} \in E(G).$

First we claim that the weight $w$ is pseudoultrametrizable.

To prove this claim we will use Theorem \ref{kasya}.

Let us consider an arbitrary cycle $C \subseteq G.$
By Theorem \ref{kasya}, $w$ is pseudoultrametrizable if and only if we can find
different $e_1,$ $e_2 \in E(C)$ such that
\begin{equation}
    \label{uro1}
w(e_1) = w(e_2) = \max_{e \in E(C)} w(e).
\end{equation}

Since $C$ is a finite graph, there exists $v_0 \in V(C)$ such that
\begin{equation}
    \label{s1}
l(v_0) = \max_{u \in V(C)} l(u). 
\end{equation}
It follows from the definition of the cycles that there are two distinct vertices $u_1, u_2 \in V(C)$ such that $\{u_1, v_0\}$ and $\{u_2, v_0\}$ belong to $E(C)$. Equalities \eqref{trean1} and \eqref{s1} imply that
\[
w(\{u_1, v_0\}) = l(v_0) = w(\{u_2, v_0\}).
\]
Consequently \eqref{uro1} holds with $e_1 = \{u_1, v_0\}$, $e_2 = \{u_2, v_0\}$.
Thus $w : E(G) \to \mathbb{R}^+$ is pseudoultrametrizable by Theorem \ref{kasya}.
Theorem \ref{kasya} implies also that the mapping $\rho_w : V(G) \times V(G) \to \mathbb{R}^+$ 
defined by formula \eqref{kjvdde} is a pseudoultrametric on $V(G)$. 
Equality \eqref{zero} implies the equality $d_l=\rho_w$. Thus $d_l$ also is a pseudoultrametric on $V(G)$.

 Let $G$ be locally finite.
 Suppose that $d_l$ is an ultrametric on $V(G)$.
Let $\{u,v\}$ be an edge of $G$, 
\begin{equation*}
\{u,v\} \in E(G).
\end{equation*}
Then inequality \eqref{eq:10}  follows from Corollary \ref{btoy} and the definition of ultrametrics.

Suppose now that inequality \eqref{eq:10} holds for all $\{u,v\} \in E(G).$
We must show that $d_l$ is an ultrametric on $V(G).$

Suppose contrary that $d_l$ is not an ultrametric.
Then, using the definitions of the ultrametric spaces and pseudoultrametric ones, we can find different $u^*, v^* \in V(G)$ such that
\begin{equation}
\label{s4}
d_l(u^*,v^*)=0.
\end{equation}

If the inequality
\[
\max\{l(u^*),l(v^*)\}>0
\]
holds, then using \eqref{fdes1} with \( x=u^*\) and \( y=v^* \), we obtain
\[
d_l(u^*,v^*)>0,
\]
contrary to \eqref{s4}. Thus the equalities 
\begin{equation}
    \label{s5}
l(u^*)=0=l(v^*)
\end{equation}
hold.

Let \( N(u^*) \) be the set of all vertices of \( G \) which are adjacent with \( u^* \),
\[
N(u^*)=\{u\in V(G): \{u,u^*\}\in E(G)\}.
\]
Recall that the degree \( \delta_G(u^*) \) is the cardinal number of the set \( N(u^*) \),
\[
\delta_G(u^*)=|N(u^*)|.
\]
The graph \( G \) is locally finite. Consequently, the degree $\delta_G(u^*)$ is finite. Hence the set \( N(u^*) \) is finite,
\begin{equation}
    \label{s6}
|N(u^*)|<\infty.
\end{equation}
Moreover, since \eqref{eq:10} holds for all \( \{u,v\}\in E(G) \), double equality \eqref{s5} implies the inequality 
\[ 
l(u)>0 
\] 
for each \( u\in N(u^*) \).
The last inequality and \eqref{s6} give us the inequality
\begin{equation}
    \label{s7}
\inf_{u\in N(u^*)} l(u) > 0.
\end{equation}
Now using \eqref{s7} and formula \eqref{fdes1} we can prove that
\[
d_l(u^*,v^*) \ge \inf_{u\in N(u^*)} l(u) > 0,
\]
contrary to \eqref{s4}. 

Thus, if $G$ is locally finite and \eqref{eq:10} holds for each $\{u,v\}\in E(G),$ then pseudoultrametric \( d_l: V(G)\times V(G)\to \mathbb{R}^+ \) is an ultrametric on \( V(G) \) as required.

The proof is completed.
\end{proof}

\begin{Definition}\label{maks}
    Let $G=G(l)$ be a connected labeled graph. The 
labeling 
\(
l : V(G) \to \mathbb{R}^+
\)
is {\it non-degenerate} if the inequality
\[
\max \{ l(u), l(v) \} > 0
\]
holds for each $\{u,v\} \in E(G)$.
\end{Definition}

Theorem \ref{tteo5} implies the following corollary.

\begin{Corollary}\label{esghkl}
    Let $G = G(l)$ be a connected finite labeled graph  and let a mapping $d_l $
be generated by $G(l)$.
Then $d_l$ is an ultrametric on $V(G)$ if and only if the labeling 
\(
l : V(G) \to \mathbb{R}^+
\)
is {\it non-degenerate}.
\end{Corollary}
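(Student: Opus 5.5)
The plan is to obtain Corollary~\ref{esghkl} directly from Theorem~\ref{tteo5}, by checking that a finite graph meets its hypotheses and then identifying condition \eqref{eq:10} with non-degeneracy.

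First, I will note that $G$ is locally finite. Since $G$ is finite, for every $v \in V(G)$ the set
\[
\{u \in V(G) : \{u,v\} \in E(G)\}
\]
is a subset of the finite set $V(G)$. Hence $\delta_G(v) \le |V(G)| - 1 < \infty$ by \eqref{mmks}.

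Second, $G$ is connected by assumption. Therefore the first part of Theorem~\ref{tteo5} shows that $d_l$ is a pseudoultrametric on $V(G)$. Its second part, applicable because $G$ is locally finite, shows that $d_l$ is an ultrametric if and only if $\max\{l(u),l(v)\} > 0$ for every $\{u,v\} \in E(G)$.

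Third, by Definition~\ref{maks}, this last condition is exactly the statement that $l$ is non-degenerate. This gives the equivalence in both directions.

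There is no real obstacle. The only point to state explicitly is that finiteness implies local finiteness, and the degenerate case where $E(G)=\emptyset$ (so $|V(G)|=1$) is consistent: non-degeneracy holds vacuously and $d_l$ is trivially an ultrametric on a one-point set.
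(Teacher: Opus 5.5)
Your proposal is correct and follows the paper's route. The paper simply states that Theorem~\ref{tteo5} implies the corollary, with the implicit step that a finite graph is locally finite; condition \eqref{eq:10} is then exactly non-degeneracy by Definition~\ref{maks}, and you make both points explicit.
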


\begin{figure}[ht]
\centering

\begin{minipage}[c]{0.48\textwidth}
\centering
\begin{tikzpicture}[x=0.38pt,y=-0.38pt, line cap=round, line join=round]

\node[font=\small\itshape] at (350,105) {$G$};

\coordinate (u1) at (282,385);
\coordinate (u2) at (648,382);
\coordinate (v1) at (466, 45);
\coordinate (v2) at (466,120);
\coordinate (v4) at (466,231);
\coordinate (vn) at (467,307);

\draw[line width=0.9pt] (u1) -- (v1) -- (u2);
\draw[line width=0.9pt] (u1) -- (v2) -- (u2);
\draw[line width=0.9pt] (u1) -- (v4) -- (u2);
\draw[line width=0.9pt] (u1) -- (vn) -- (u2);

\fill (u1) circle (1.6pt);
\fill (u2) circle (1.6pt);
\fill (v1) circle (1.6pt);
\fill (v2) circle (1.6pt);
\fill (v4) circle (1.6pt);
\fill (vn) circle (1.6pt);

\fill (466,170-8) circle (0.4pt);
\fill (466,170)   circle (0.4pt);
\fill (466,170+8) circle (0.4pt);

\node[font=\small] at (274,399) {$x$};
\node[font=\small] at (656,397) {$y$};

\node[font=\small] at (466, 30) {$z_{n+1}$};

\fill (466,-7-8) circle (0.4pt);
\fill (466,-7)   circle (0.4pt);
\fill (466,-7+8) circle (0.4pt);

\node[font=\small] at (466,105) {$z_n$};
\node[font=\small] at (466,216) {$z_2$};
\node[font=\small] at (468,292) {$z_1$};

\end{tikzpicture}
\end{minipage}
\hfill
\begin{minipage}[c]{0.48\textwidth}
\centering
\begin{tikzpicture}[every node/.style={font=\small}]

\coordinate (xmna) at (-2.5,0);
\coordinate (xmn)  at (-1.3,0);
\coordinate (x0)   at (0,0);
\coordinate (xn)   at (1.3,0);
\coordinate (xnp)  at (2.5,0);

\draw[dashed, line width=0.9pt] (-3.2,0) -- (xmna);
\draw[line width=0.9pt] (xmna) -- (xnp);
\draw[dashed, line width=0.9pt] (xnp) -- (3.2,0);

\fill (xmna) circle (1.6pt);
\fill (xmn)  circle (1.6pt);
\fill (x0)   circle (1.6pt);
\fill (xn)   circle (1.6pt);
\fill (xnp)  circle (1.6pt);

\node[below=3pt] at (xmna) {$x_{-n-1}$};
\node[below=3pt] at (xmn)  {$x_{-n}$};
\node[below=3pt] at (x0)   {$x_{0}$};
\node[below=3pt] at (xn)   {$x_{n}$};
\node[below=3pt] at (xnp)  {$x_{n+1}$};

\node at (-0.7,0.35) {$\cdots$};
\node at (0.7,0.35) {$\cdots$};

\coordinate (y) at (0,4.5);
\fill (y) circle (1.6pt);
\node[above=3pt] at (y) {$y$};

\draw[dashed, line width=0.9pt] (y) -- (-3.2,0);
\draw[dashed, line width=0.9pt] (y) -- (3.2,0);

\draw[line width=0.9pt] (y) -- (xmna);
\draw[line width=0.9pt] (y) -- (xmn);
\draw[line width=0.9pt] (y) -- (x0);
\draw[line width=0.9pt] (y) -- (xn);
\draw[line width=0.9pt] (y) -- (xnp);

\node at (-2.0,3.5) {$H$};

\end{tikzpicture}
\end{minipage}

\caption{$G$ is the union of two star-graphs. $H$ is the union of a star-graph and a double ray.}
\label{f3}
\end{figure}
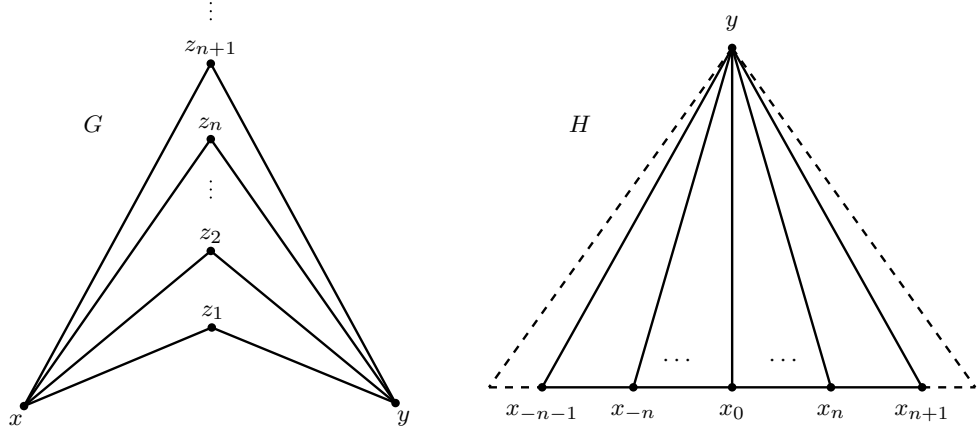

Let us consider now an example of a labeled graph $G = G(l)$ for which the pseudoultrametric $d_l$ is not an ultrametric but
the labeling $l :V(G)\to \mathbb{R}^+$ is non-degenerate.

\begin{Example}\label{rdwwdd} Let $G$ be the graph depicted by Figure \ref{f3},
\[
V(G):=\{x, y, z_1, z_2, \dots\,z_n, z_{n+1},\ldots\},
\quad
E(G) := \bigcup_{n=1}^\infty \{x, z_n\} \cup \bigcup_{n=1}^\infty \{y, z_n\}.
\]
Let us define a labeling $l : V(G)\to \mathbb{R}^+$ as
\[
l(x) := l(y) := 0
\quad
\text {and} \quad 
l(z_n) := \frac{1}{n} 
\]
for each $n \in \mathbb N$.
Then $l$ is a non-degenerate labeling, but
 we have
\[
d_l(x,y) = \inf_{P \in \mathcal{P}_{x,y}} \left(\max_{v \in V(P)} l(v)\right)
= \inf_{n \in \mathbb{N}} \max\{l(x), l(z_n), l(y)\}
\]
\[
= \inf_{n \in \mathbb{N}} \max\left\{0, \frac{1}{n}\right\}
= \inf_{n \in \mathbb{N}} \frac{1}{n} = 0.
\]
Thus $d_l$ is not an ultrametric.
\end{Example}

Let us consider an example of a connected graph $H$ that is not locally finite but generates an ultrametric $d_l$ for each non-degenerate $l : V(H) \to \mathbb{R}^+$.

\begin{Example}\label{hau} Let $H$ be the graph depicted in Figure~\ref{f3},
\[
V(H):=\{y\}\cup\left(\bigcup_{n \in \mathbb Z} \{x_n\}\right),
\quad
E(H) := \bigcup_{n \in \mathbb{Z}} \{x_n, x_{n+1}\} \cup \bigcup_{n \in \mathbb{Z}} \{y, z_n\}.
\]
Then $\delta_H(y) = |\mathbb{Z}|$ holds but a simple modification of the proof of Theorem \ref{tteo5} shows that $d_l$ is an ultrametric for each non-degenerate labeling $l : V(H) \to \mathbb{R}^+$.
\end{Example}

The following theorem gives some necessary and sufficient conditions under which an ultrametric space generated by finite labeled tree belongs to the class ${\bf GH}$.

\begin{Theorem}\label{tteo6} Let $T =T (l)$ be a finite labeled tree with $|V(T)|\geq 2$, let the labeling $l : V(T) \to \mathbb{R}^+$ be non-degenerate, let 
\[
w(\{u,v\}) := \max\{l(u), l(v)\}
\]
for each $\{u,v\} \in E(T)$, and let $d_l$ be generated by $T(l)$.
Then  the following statements are equivalent:
\begin{enumerate}
\item[(i)] $(V(T), d_l) \in {\bf GH}$.
\item[(ii)] The mapping $w : E(T) \to \mathbb{R}^+$ is injective.
\item[(iii)] The distance set $D(V(T))$ of $(V(T),d_l)$ satisfies the equality  
\begin{equation}
    \label{k1}
|D(V(T)|=|E(T)|+1.
\end{equation}
\item[(iv)] The distance set $D(V(T))$ of $(V(T),d_l)$ satisfies the equality
\begin{equation*}
2|D(V(T)|=2+\sum\limits_{v\in V(T)} \delta_T(v).
\end{equation*}
\end{enumerate}
\end{Theorem}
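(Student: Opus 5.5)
The equivalences (i)$\Leftrightarrow$(iii)$\Leftrightarrow$(iv) are bookkeeping; the substance is (ii)$\Leftrightarrow$(iii), which rests on identifying the distance set $D(V(T))$ explicitly.

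First, (i)$\Leftrightarrow$(iii)$\Leftrightarrow$(iv). By Corollary \ref{esghkl}, $(V(T),d_l)$ is an ultrametric space, since $l$ is non-degenerate. By Proposition \ref{hfdsxbj} we have $|V(T)|=|E(T)|+1$, so Definition \ref{kahkas} shows that (i) and (iii) say the same thing. Theorem \ref{rghki} gives $\sum_{v}\delta_T(v)=2|E(T)|$, so (iv) reads $2|D(V(T))|=2+2|E(T)|$, which is just (iii).

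Next, the key identity. I would prove
\[
D(V(T))=\{0\}\cup w(E(T)),
\]
where $w(E(T))=\{w(e):e\in E(T)\}$.

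For the inclusion $\supseteq$: $0=d_l(x,x)$, and Corollary \ref{btoy} gives $d_l(u,v)=w(\{u,v\})$ for every edge $\{u,v\}$.

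For the inclusion $\subseteq$: take distinct $x,y$ and let $P$ be the unique path joining them. By \eqref{fdes5} and the computation \eqref{**2} in the proof of Lemma \ref{fruj},
\[
d_l(x,y)=\max_{v\in V(P)}l(v)=\max_{e\in E(P)}w(e),
\]
and this maximum is attained at some edge of $P$, hence lies in $w(E(T))$.

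Moreover $0\notin w(E(T))$, because $l$ is non-degenerate and so every edge weight is strictly positive. Hence
\[
|D(V(T))|=1+|w(E(T))|.
\]

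Finally, (ii)$\Leftrightarrow$(iii). By the identity, (iii) holds if and only if $|w(E(T))|=|E(T)|$. Since $E(T)$ is finite, this is exactly the statement that $w$ is injective. This closes the chain of equivalences.

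I expect the only delicate point to be the identity for $D(V(T))$. In particular, the fact that $0$ is not an edge weight is precisely where non-degeneracy is used, and the hypothesis $|V(T)|\ge 2$ ensures that $E(T)$ is non-empty.
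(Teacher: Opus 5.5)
Your proposal is correct and follows essentially the paper's route: the paper also establishes $w(E(T)) = D(V(T))\setminus\{0\}$ using Corollary~\ref{btoy} and non-degeneracy, and it gets (i)$\Leftrightarrow$(iii)$\Leftrightarrow$(iv) from Proposition~\ref{hfdsxbj} and Theorem~\ref{rghki}. The only difference is in (ii)$\Rightarrow$(i): there the paper uses just the inclusion $w(E(T))\subseteq D(V(T))\setminus\{0\}$ and gets the upper bound from the Gomory--Hu inequality, whereas your exact count $|D(V(T))|=1+|w(E(T))|$ makes that appeal unnecessary and in fact reproves the Gomory--Hu bound for spaces generated by labeled trees.
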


\begin{proof}
$(i) \Rightarrow (ii)$. Let $(V(T), d_l) \in {\bf GH}$ hold.

Let us consider the distance set $D(V(T))$ of the ultrametric space $(V(T), d_l)$,
\[
D(V(T)) = \{ d_l(x,y) : x,y \in V(T) \}.
\]
Then the equality
\[
|V(T)| = |D(V(T))|
\]
holds by Definition~\ref{kahkas}.

Since $l: \in V(T) \to \mathbb R^+$ is non-degenerate, Corollary \ref{btoy} implies the inclusion 
\begin{equation}\label{j1}
w(E(T)) \subseteq D(V(T))\setminus\{0\}.
\end{equation}
Moreover, using \eqref{fdes1}, we obtain
the converse inclusion
\[
w(E(T)) \supseteq D(V(T)) \setminus \{0\}.
\]
Thus the equality
\begin{equation}
    \label{g1}
w(E(T)) = D(V(T)) \setminus \{0\}
\end{equation}
holds.

Since the tree $T$ is finite, the mapping 
$w : E(T) \to \mathbb{R}^+$
is injective if and only if the equality
\begin{equation}
    \label{g2}
|w(E(T))| = |E(T)|
\end{equation}
holds.

Equality \eqref{g1} and the membership relation 
$(V(T), d_l) \in {\bf GH}$ imply the equality
\begin{equation}
    \label{g3}
|w(E(T))| = |V(T)| - 1
\end{equation}
by Definition \ref{kahkas}.

Since $T$ is a tree we also have
\begin{equation}
    \label{g4}
    |E(T)| = |V(T)| - 1 
\end{equation}
by Proposition \ref{hfdsxbj}. Equality \eqref{g2} follows from \eqref{g3} and \eqref{g4}. Statement $(ii)$ is proved.

$(ii) \Rightarrow (i)$. 
Let $w : E(T) \to \mathbb{R}^+$ be injective. Corollary \ref{btoy} implies  inclusion \eqref{j1}.
Consequently the inequality
\[
|D(V(T))| \ge |E(T)| + 1
\]
holds.
Since $T$ is a tree, the last inequality and Proposition \ref{hfdsxbj} give us the inequality
\begin{equation}
    \label{kam1}
|D(V(T))| \ge |V(T)|.
\end{equation}

The Gomory--Hu inequality \eqref{dhlu} with $(X,d) = (V(T), d_l)$ implies the  inequality
\begin{equation}
    \label{kam2}
|D(V(T))| \le |V(T)|.
\end{equation}
Hence the equality
\begin{equation}
    \label{hruh}
|D(V(T))| = |V(T)|
\end{equation}
holds. The last equality implies statement $(i)$ by Definition \ref{kahkas}.

$(i) \Leftrightarrow (iii)$. By Definition \ref{kahkas}, statement $(i)$ holds if and only if  we have equality \eqref{hruh}. Proposition \ref{hfdsxbj} implies that equalities \eqref{k1} and \eqref{hruh} are equivalent. Thus the logical equivalence $(i) \Leftrightarrow (iii)$ is valid. 

$(iii) \Leftrightarrow (iv)$. The validity of the equivalence 
$(iii) \Leftrightarrow (iv)$ follows from Theorem \ref{rghki}.

The proof is completed. 
\end{proof}

\begin{Lemma}\label{ifff}
Let $T$ be a finite tree. Then there exists a non-degenerate labeling 
\(
l : V(T) \to \mathbb{R}^+
\)
such that 
\begin{equation}
    \label{uutt}
    (V(T), d_l) \in {\bf GH},
\end{equation}
and  the equality
\begin{equation}
    \label{fni}
d_l(x,y)= \max\{l(x), l(y)\}
\end{equation}
holds for any two distinct $x$, $y\in V(T)$.
\end{Lemma}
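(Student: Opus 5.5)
Fix a root $r$ of $T$, turning $T$ into a rooted tree $T(r)$, and label vertices by strictly increasing values along levels. Concretely, enumerate the vertices $v_1,\dots,v_n$ ($n=|V(T)|$) so that $\operatorname{lev}(v_i)\le \operatorname{lev}(v_j)$ whenever $i<j$; thus $v_1=r$ and the vertices are listed level by level. Put $l(v_i):=i$. The root then has label $1$, every label is positive, and $l$ is injective. Positivity of all labels makes $l$ non-degenerate (Definition \ref{maks}), so $d_l$ is an ultrametric by Corollary \ref{esghkl}. (If $|V(T)|=1$ everything is trivial: $D=\{0\}$.)

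The main step is to verify \eqref{fni}. Let $x\ne y$ and let $P$ be the unique path joining them (Proposition \ref{hfdsxbj}). By \eqref{fdes5} we have $d_l(x,y)=\max_{v\in V(P)}l(v)$, so it suffices to show that this maximum is attained at an endpoint of $P$. Let $z$ be the vertex of $P$ of minimal level; this is the "meeting vertex" of $x$ and $y$ in the rooted tree. The path $P$ consists of the path from $x$ up to $z$ followed by the path from $z$ down to $y$, and along each of these pieces the level changes by exactly one at each step. Hence every interior vertex $v$ of $P$ satisfies $\operatorname{lev}(v)<\max\{\operatorname{lev}(x),\operatorname{lev}(y)\}$. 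I expect the only delicate point to be justifying this monotonicity: a path in a rooted tree goes up and then down, because otherwise two neighbours on $P$ would have a common parent and $T$ would contain a cycle. Since the labels are ordered compatibly with levels, every interior vertex has a strictly smaller label than the deeper endpoint. This gives $\max_{v\in V(P)}l(v)=\max\{l(x),l(y)\}$.

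Finally, for \eqref{uutt} we apply Theorem \ref{tteo6}. The weight is $w(\{u,v\})=\max\{l(u),l(v)\}$, and for each edge the child endpoint has the larger level and hence the larger label. So $w(\{u,v\})$ equals the label of the child. Each non-root vertex is the child in exactly one edge (the edge to its parent), so $w$ is injective because $l$ is. By the implication $(ii)\Rightarrow(i)$ of Theorem \ref{tteo6}, $(V(T),d_l)\in{\bf GH}$.
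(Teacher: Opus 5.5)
Your proposal is correct and follows essentially the paper's route. You root $T$, take an injective labeling that increases with level (your $l(v_i)=i$ is a special case of the paper's choice), get $(V(T),d_l)\in{\bf GH}$ from the injectivity of $w$ via Theorem \ref{tteo6}, and prove \eqref{fni} by showing that interior vertices of the $x$--$y$ path have level below the deeper endpoint.

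There are two small differences. The paper gets this level bound by noting that the $x$--$y$ path lies in $P_x\cup P_y$, the union of the two paths to the root, instead of using your up-then-down argument. In that argument, the obstruction should be that some vertex of $P$ would have two distinct parents, not that two neighbours share a parent. Also, your child-label argument spells out the injectivity of $w$, which the paper only calls ``easy to see''.
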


\begin{proof}
Let $r$ be a fixed vertex of the tree $T$. Then there are a non-negative integer number $n$ and disjoint non-empty subsets $V_0,\ldots,V_n$ of the vertex set $V(T)$ such that
\begin{equation}
    \label{r1}
V(T) = \bigcup_{i=0}^{n} V_i
\end{equation}
and the equality
\begin{equation}
    \label{r2}
\operatorname{lev}(v) = i
\end{equation}
holds for every $v \in V_i$ and each $i \in \{0, \ldots, n\}$, where $\operatorname{lev}(v)$ is the level of the vertex $v$ in the rooted tree $T(r)$.

Let
\(
l : V(T) \to \mathbb{R}^+
\)
be an injective labeling and let the inequality
\[
l(v) < l(u)
\]
hold whenever $\operatorname{lev}(v) < \operatorname{lev} (u)$.
Then it is easy to see that the mapping $w : E(T) \to \mathbb{R}^+$,
\[
w(\{u,v\}): = \max\{l(u), l(v)\}
\]
for each $\{v,u\} \in E(T)$, also is injective.

Consequently \eqref{uutt} holds by Theorem \ref{tteo6}.

Let us prove equality \eqref{fni}.
Let $x$ and $y$ be two distinct vertices of $T$ and let
\(
P_x = (x,\dots,r), \) and \(P_y = (y,\dots,r)
\)
be the paths joining $x$ with  $r$ and, respectively, $y$ with $r$. Let us consider the graph $P_x \cup P_y$,
\begin{equation}
    \label{n0}
V(P_x \cup P_y) := V(P_x) \cup V(P_y), 
\end{equation}
\begin{equation}
        \label{n00}
E(P_x \cup P_y) := E(P_x) \cup E(P_y). 
\end{equation}
Then the inequality
\begin{equation}
    \label{n1}
\operatorname{lev}(v) < \max\{\operatorname{lev}(x), \operatorname{lev}(y)\}
\end{equation}
holds for each $v \in V(P_x \cup P_y)\setminus\{x,y\}$.

Let $P_T$ be the path joining $x$ and $y$ in $T$. Since the graph $P_x \cup P_y$ is connected, there is a path $P^*$ joining $x$ and $y$ in $P_x \cup P_y$,
\begin{equation}
    \label{n2}
P^* \subseteq P_x \cup P_y.
\end{equation}
It follows from \eqref{n0}  and \eqref{n00} that $P_x \cup P_y$ is a subgraph of $T$,
\begin{equation}
\label{n3}
P_x \cup P_y \subseteq T. 
\end{equation}
Now \eqref{n2} and \eqref{n3} imply the inclusion
\begin{equation}
\label{n4}
P^* \subseteq T. 
\end{equation}
Since $T$ is a tree, statement $(ii)$ of Proposition \ref{hfdsxbj} and inclusion \eqref{n4} give us the equality
\[
P^* = P_T.
\]
The last equality and inequality \eqref{n1} imply the inequality 
\begin{equation}
    \label{hasd}
\max_{v \in V(P_T)} l(v)\leq \max \{l(x),l(y)\}.
\end{equation}
Inequality \eqref{hasd} and equality \eqref{samf} give us the inequality
\[
d_l(x,y) \le \max \{ l(x), l(y) \}.
\]
The converse inequality
\[
d_l(x,y) \ge \max \{ l(x), l(y) \}
\]
follows directly from \eqref{samf}.
Equality \eqref{fni} is proven.

The proof is completed. 

\end{proof}

The following theorem gives us a partial generalization of Theorem \ref{tteo6}.

\begin{Theorem}\label{mak} Let $G$ be a finite connected graph. Then, for each labeling
$l: V(G) \to \mathbb{R}^+$, we have the inequality
\begin{equation}
    \label{h1}
|D(V(G))| \le |E(G)| + 1,
\end{equation}
where $D(V(G))$ is the distance set of the pseudoultrametric space $(V(G), d_l)$ with $d_l : V(G)\times V(G) \to \mathbb R^+$ generated by $G(l)$. Moreover,
the following statements are equivalent:
\begin{enumerate}
    \item[(i)] There is a labeling $l : V(G) \to \mathbb{R}^+$ such that
\begin{equation}
    \label{h2}
|D(V(G))| = |E(G)| + 1.
\end{equation}
\item[(ii)] The graph $G$ is a tree.
\end{enumerate}
\end{Theorem}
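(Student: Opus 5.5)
The plan is to bound the distance set directly in terms of the edge weights. Set $w(\{u,v\}) := \max\{l(u),l(v)\}$ for each $\{u,v\}\in E(G)$, as in Lemma~\ref{fruj}. The key observation is the inclusion
\[
D(V(G)) \subseteq \{0\} \cup w(E(G)).
\]
To prove it, fix distinct $x,y\in V(G)$. Since $G$ is finite, the set $\mathcal{P}_{x,y}$ is finite and non-empty (by connectedness). Hence the infimum in \eqref{zero} is a minimum, attained at some path $P$, and it equals $\max_{e\in E(P)} w(e)$. This maximum is attained at some edge $e\in E(P)\subseteq E(G)$, so $d_l(x,y)=w(e)\in w(E(G))$. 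Together with $d_l(x,x)=0$ this gives the inclusion. Consequently
\[
|D(V(G))| \le 1+|w(E(G))| \le 1+|E(G)|,
\]
which is \eqref{h1}. This holds for arbitrary non-negative labelings, since Theorem~\ref{tteo5} already guarantees that $d_l$ is a pseudoultrametric.

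For $(ii)\Rightarrow(i)$, let $G$ be a tree. If $|V(G)|=1$, then $E(G)=\emptyset$ and $D(V(G))=\{0\}$, so \eqref{h2} holds for any labeling. If $|V(G)|\ge 2$, Lemma~\ref{ifff} supplies a non-degenerate labeling $l$ with $(V(G),d_l)\in{\bf GH}$. Theorem~\ref{tteo6}, implication $(i)\Rightarrow(iii)$, then gives $|D(V(G))|=|E(G)|+1$, which is \eqref{h2}.

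For $(i)\Rightarrow(ii)$, suppose some labeling $l$ satisfies \eqref{h2}. By Proposition~\ref{dxo}, there is $Y\subseteq V(G)$ such that $d_l|_{Y\times Y}$ is an ultrametric and $D(Y)=D(V(G))$. The Gomory--Hu inequality \eqref{dhlu} applied to this finite ultrametric space gives
\[
|E(G)|+1=|D(V(G))|=|D(Y)|\le |Y|\le |V(G)|.
\]
Hence $1+|E(G)|\le|V(G)|$, and Corollary~\ref{letp} shows that $G$ is a tree.

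The only step needing care is the first one: justifying that the infimum in \eqref{fdes1} is attained, using finiteness of $\mathcal{P}_{x,y}$, so that every nonzero distance is literally an edge weight. The pseudoultrametric (possibly degenerate) case in $(i)\Rightarrow(ii)$ is handled by passing to $Y$ via Proposition~\ref{dxo}, so non-degeneracy is never needed there.
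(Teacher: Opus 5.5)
Your proof is correct. For $(i)\Rightarrow(ii)$ you argue exactly as the paper does: Proposition~\ref{dxo}, then the Gomory--Hu inequality on $Y$, then Corollary~\ref{letp}. For $(ii)\Rightarrow(i)$ you also follow the paper, via Lemma~\ref{ifff} and Theorem~\ref{tteo6}. You do improve slightly on it by treating the one-vertex tree separately; the paper invokes Theorem~\ref{tteo6}, which assumes $|V(T)|\ge 2$, without that check.

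The genuine difference is in the proof of \eqref{h1}.
\begin{itemize}
\item The paper passes to the ultrametric subspace $Y$ of Proposition~\ref{dxo}. It applies the Gomory--Hu inequality to get $|D(V(G))|=|D(Y)|\le|Y|\le|V(G)|$. It then uses $|V(G)|\le 1+|E(G)|$, which actually rests on a spanning tree (Theorem~\ref{kiii}) together with Proposition~\ref{hfdsxbj}(iii).
\item You instead prove the inclusion $D(V(G))\subseteq\{0\}\cup w(E(G))$ directly from the finiteness of $\mathcal{P}_{x,y}$. This extends the inclusion \eqref{g1} from trees to arbitrary finite connected graphs.
\end{itemize}

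What each buys: your route is elementary and needs neither Gomory--Hu, Proposition~\ref{dxo}, nor spanning trees. The paper's route yields the intermediate bound $|D(V(G))|\le|V(G)|$, which is strictly sharper than \eqref{h1} whenever $G$ has a cycle. That sharper bound is exactly what drives $(i)\Rightarrow(ii)$ in both your write-up and the paper's.

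Your inclusion could also give $(i)\Rightarrow(ii)$ without Gomory--Hu. If $G$ contains a cycle $C$, then, as in the proof of Theorem~\ref{tteo5}, the two edges of $C$ at a vertex of maximal label on $C$ have the same weight. Hence $|w(E(G))|\le|E(G)|-1$, so $|D(V(G))|\le|E(G)|$, contradicting \eqref{h2}.
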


\begin{proof}
Let us prove inequality \eqref{h1}.

By Proposition \ref{dxo} we can find $Y \subseteq V(G)$ such that
\(\rho=d_l|_{Y\times Y}\) is an ultrametric on $Y$ and the equality
\begin{equation}
    \label{sddefh}
D(V(G)) = D(Y)
\end{equation}
holds, 
where $D(Y)$ is the distance set of $(Y,\rho)$. The set $Y$ is a subset of $V(G)$, that implies the inequality
\[
|Y| \le |V(G)|.
\]
The last inequality and condition $(ii)$ of 
 Proposition \ref{hfdsxbj} give us  the inequality
\begin{equation}
    \label{zz1}
|Y| \le 1 + |E(G)|.
\end{equation}
Since the space $(Y,\rho)$ is ultrametric, we have
\begin{equation}
    \label{zz2}
|D(Y)| \le |Y|
\end{equation}
by  the Gomory--Hu inequality. Now using \eqref{sddefh}--\eqref{zz2}, we obtain
\[
|D(V(G))| = |D(Y)| \le |Y| \le 1 + |E(G)|.
\]
Inequality \eqref{h1} follows.

Let us prove that statements $(i)$ and $(ii)$ are equivalent.

 $(i) \Rightarrow (ii)$. Suppose that there is a labeling
\(
l : V(G) \to \mathbb{R}^+
\)
such that equality \eqref{h2} holds for the distance set $D(V(G))$ of the space $(V(G), d_l)$. 
 
We must show that $G$ is a tree. 
By Corollary \ref{letp} the graph $G$ is a tree if the inequality
\begin{equation}
    \label{lewr}
|V(G)| \geq 1 + |E(G)|
\end{equation}
holds.
By Theorem \ref{kiii} we can find a tree $T$ such that
\begin{equation}
    \label{mhl}
V(T) = V(G) \quad \text{and} \quad E(T) \subseteq E(G).
\end{equation}

Equalities \eqref{h2} and \eqref{sddefh} imply the equality
\begin{equation}
\label{fti}
|D(Y)| = |E(G)| + 1.
\end{equation}
The space $(Y, \rho)$ is an ultrametric space. Consequently, we have \eqref{zz2} by Gomory--Hu inequality. Inequality \eqref{zz2}, equality \eqref{fti} and the inclusion $Y \subseteq V(G)$ give us
\[
|E(G)| + 1 = |D(Y)| \le |Y| \le |V(G)|.
\]
Consequently inequality \eqref{lewr} holds. Thus $G$ is a tree as required.

 $(ii) \Rightarrow(i)$. Let $G$ be a tree. Then, by Lemma \ref{ifff}, there is a labeling
\(
l : V(G) \to \mathbb{R}^+
\)
such that
\begin{equation}
    \label{o1}
(V(G), d_l) \in {\bf GH},
\end{equation}
where $d_l$ is an ultrametric on $V(G)$ generated by $G(l)$.

Formula \eqref{o1} and statements $(i)$, $(iii)$ of Theorem \ref{tteo6} with $T(l)=G(l)$ imply equality \eqref{h2} as required.

The proof is completed. 
\end{proof}

Theorems \ref{rghki} and \ref{mak} imply the following corollary.

\begin{Corollary}\label{ter}
Let $G$ be a finite connected graph.
Then the following statements are equivalent:
\begin{enumerate}
    \item[(i)] There is a labeling $l : V(G) \to \mathbb{R}^+$ such that
\[
2|D(V(G)|= 2+\sum\limits_{v \in V(G)} \delta_G(v),
\]
where  $D(V(G))$ is the distance set of the pseudoultrametric space $(V(G), d_l)$ with $d_l : V(G)\times V(G) \to \mathbb R^+$ generated by $G(l)$.
\item[(ii)] The graph $G$ is a tree.
\end{enumerate}
\end{Corollary}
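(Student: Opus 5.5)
The plan is to reduce Corollary~\ref{ter} directly to Theorem~\ref{mak}, using the handshake identity of Theorem~\ref{rghki} to translate between the two numerical conditions. Since $G$ is a finite graph, Theorem~\ref{rghki} gives
\[
\sum_{v\in V(G)} \delta_G(v) = 2|E(G)|.
\]
For any labeling $l\colon V(G)\to\mathbb{R}^+$, the equality in statement $(i)$ of the corollary is therefore equivalent to
\[
2|D(V(G))| = 2 + 2|E(G)|,
\]
that is, to equality \eqref{h2}, $|D(V(G))| = |E(G)|+1$. Here $D(V(G))$ is the distance set of the pseudoultrametric space $(V(G),d_l)$; it is a pseudoultrametric space by Theorem~\ref{tteo5}.

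This equivalence holds labeling by labeling. Consequently, the existence of a labeling $l$ satisfying the condition in $(i)$ of Corollary~\ref{ter} is equivalent to the existence of a labeling satisfying statement $(i)$ of Theorem~\ref{mak}. By the equivalence $(i)\Leftrightarrow(ii)$ in Theorem~\ref{mak}, the latter holds if and only if $G$ is a tree, which proves the corollary in both directions.

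There is no real obstacle here. The only point to check is that the pointwise equivalence of the two equalities uses nothing beyond the finiteness of $G$, which Theorem~\ref{rghki} requires. In the direction $(ii)\Rightarrow(i)$, one can simply take the labeling supplied by Lemma~\ref{ifff} via Theorem~\ref{mak}. Alternatively, one can apply the equivalence $(i)\Leftrightarrow(iv)$ of Theorem~\ref{tteo6} to that labeling directly.
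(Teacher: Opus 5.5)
Your proposal is correct and is essentially the paper's own argument: the paper derives Corollary~\ref{ter} directly from Theorem~\ref{rghki} and Theorem~\ref{mak}, via the same labeling-by-labeling equivalence $2|D(V(G))| = 2+\sum_{v}\delta_G(v) \iff |D(V(G))| = |E(G)|+1$ that you describe. One small note on your alternative via Theorem~\ref{tteo6}: that theorem assumes $|V(T)|\ge 2$, so the one-vertex tree needs a separate check, which is trivial because both sides then equal $2$.
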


The next proposition follows from Theorem \ref{kiii} and Lemma \ref{ifff}. 

\begin{Proposition}\label{lero}
Let $G$ be a finite connected graph. Then there exists a non-degenerate labeling 
\(
l : V(G) \to \mathbb{R}^+
\)
such that 
\begin{equation}
    \label{uutt-1}
    (V(G), d_{l}) \in {\bf GH},
\end{equation}
where $d_{l}$ is an ultrametric on $V(G)$ generated by $G(l)$.
\end{Proposition}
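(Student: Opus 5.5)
Fix a spanning tree $T\subseteq G$ with $V(T)=V(G)$, which exists by Theorem~\ref{kiii}. Apply Lemma~\ref{ifff} to $T$ to get a non-degenerate labeling $l\colon V(T)\to\mathbb{R}^+$ with $(V(T),d_l^T)\in{\bf GH}$ and $d_l^T(x,y)=\max\{l(x),l(y)\}$ for all distinct $x,y\in V(T)$. Here $d_l^T$ is the ultrametric generated by $T(l)$. Since $V(T)=V(G)$, regard $l$ as a labeling of $G$ and compare $d_l^G$, the pseudoultrametric generated by $G(l)$, with $d_l^T$. I will show the two coincide, which gives \eqref{uutt-1} at once.

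\emph{Non-degeneracy on $G$.} Take the labeling from the proof of Lemma~\ref{ifff}: it is injective and increases with level. At most one vertex, the root, can have label $0$, so every edge of $G$ has an endpoint with positive label. Thus $l$ is non-degenerate on $G$, and $d_l^G$ is an ultrametric by Corollary~\ref{esghkl}. (To be safe, one may also simply choose all labels positive.)

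\emph{Comparison of the metrics.} Fix distinct $x,y$. Every path in $\mathcal{P}_{x,y}$ contains $x$ and $y$, so, as in the proof of Corollary~\ref{btoy}, $d_l^G(x,y)\ge \max\{l(x),l(y)\}$. Every path in $T$ is a path in $G$, so $d_l^G(x,y)\le d_l^T(x,y)=\max\{l(x),l(y)\}$ by \eqref{fni}. Hence $d_l^G=d_l^T$ on $V(G)\times V(G)$. The two spaces have the same distance set and the same number of points, so $(V(G),d_l^G)\in{\bf GH}$ because $(V(T),d_l^T)\in{\bf GH}$.

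The only delicate point is the lower bound $d_l^G(x,y)\ge\max\{l(x),l(y)\}$ for non-adjacent $x,y$, since Corollary~\ref{btoy} is stated only for edges. Its proof, however, uses only that $x,y$ lie on every path in $\mathcal{P}_{x,y}$, so it applies verbatim. Everything else is direct bookkeeping with the earlier results.
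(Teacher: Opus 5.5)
Your proof is correct and follows the paper's argument. You take a spanning tree via Theorem~\ref{kiii} and the labeling from Lemma~\ref{ifff}, then use the two inequalities $d_l^G\le d_l^T$ (every path of $T$ is a path of $G$) and $d_l^G(x,y)\ge\max\{l(x),l(y)\}=d_l^T(x,y)$ (both endpoints lie on every path). Your explicit check that $l$ is non-degenerate on all of $G$ is a detail the paper leaves implicit; it also follows directly from $d_l^G=d_l^T$ being an ultrametric together with Corollary~\ref{btoy}.
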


\begin{proof}
By Theorem \ref{kiii} there is a tree $T$ such that
\(
E(T) \subseteq E(G)\) and \(V(T)=V(G).
\)
Using Lemma \ref{ifff} we can find 
\(
l: V(T) \to \mathbb{R}^+
\)
such that 
\begin{equation}
    \label{d1}
    (V(T), d_l) \in {\bf GH},
\end{equation}
where
\(
d_l : V(T)\times V(T)\to \mathbb{R}^+
\)
is an ultrametric on $V(T)$ generated by labeled tree $T(l)$.

Let us define a labeling \(
l : V(G)\to \mathbb{R}^+
\) as
$l^*(v) := l(v)$
for every $v \in V(G)$. We claim that the ultrametric $d_{l^*} : V(G) \times V(G) \to \mathbb{R}^+$ generated by $G(l^*)$ coincides with the ultrametric $d_l : V(T) \times V(T) \to \mathbb{R}^+$ generated by $T(l)$.

Let us consider an arbitrary pair of distinct $x,y \in V(G)$.  
We must show that
\begin{equation}
    \label{mam59}
d_l(x,y) = d_{l^*}(x,y). 
\end{equation}
Since $l^*(v) = l(v)$ holds for each $v \in V(G)$, the inclusion $E(T) \subseteq E(G)$ and formula \eqref{fdes1} imply the inequality
\begin{equation*}
d_l(x,y) \ge d_{l^*}(x,y). 
\end{equation*}
Thus \eqref{mam59} holds if we have the converse inequality
\begin{equation}
    \label{mam61}
d_l(x,y) \le d_{l^*}(x,y). 
\end{equation}

Using equalities \eqref{fdes1} and \eqref{fni} we can rewrite \eqref{mam61} in the form
\begin{equation}
    \label{mam62}
\max\{ l(x), l(y)\} \le \inf_{P\in \mathcal{P}_{x,y}}\left(\max_{v\in V(P)} l(v)\right),
\end{equation}
where $\mathcal{P}_{x,y}$ is the set of all paths joining $x$ and $y$ in $G$. Since $x$ and $y$ are vertices of each $P \in \mathcal{P}_{x,y}$, we obtain
\begin{equation}
    \label{mam63}
\max_{v \in V(P)} l^*(v) \ge \max\{l^*(x), l^*(y)\}
= \max\{l(x), l(y)\} 
\end{equation}
for every $P \in \mathcal{P}_{x,y}$.

Now \eqref{mam62} follows from \eqref{mam63}. Thus \eqref{mam61} holds, that implies \eqref{mam59}.
Formulas \eqref{d1} and \eqref{mam59} imply \eqref{uutt-1}. 

The proof is completed. 
\end{proof}

Now we will give an example of $\bf GH$-space generated by finite complete graph.

Recall that a graph $G$ is called {\it complete} if the membership 
\begin{equation*}
    \{u,v\} \in E(G)
\end{equation*}
is valid for every two distinct $u,v \in V(G)$. It is clear that every complete graph is connected.

\begin{Example}\label{hlo}
 Let $G = G(l)$ be a finite complete graph with  non-degenerate labeling $l : V(G) \to \mathbb{R}^+$. Then the following statements are equivalent:
\begin{enumerate}
    \item [(i)] $(V(G), d_l) \in {\bf GH}$ holds, where $d_l$ is an ultrametric on $V(G)$ generated by $G(l)$.
\item[(ii)] The set
\[
l(V) := \{ l(v) : v \in V(G) \}
\]
contains at least $|V(G)| - 1$ points, 
\[
|l(V)| \geq |V(G)| - 1.
\]
\end{enumerate}
\end{Example}

\section{Conclusion --- Expected Results}\label{sec4}

We believe that the following hypothesis is correct.

\begin{Conjecture}\label{far}
Let $i=1,2$ and $T_i=T_i(l_i)$  be finite labeled trees with non-degenerate labelings $l_i : V(T_i) \to \mathbb{R}^+$.
If $(V(T_1), d_{l_1})$ and $(V(T_2), d_{l_2})$ are $\bf GH$-spaces, then the following statements are equivalent:
\begin{enumerate}
    \item[(i)] The equality
\[
D(V(T_1)) = D(V(T_2))
\]
holds, where  $D(V(T_1))$ and $D(V(T_2))$  are  the distance set of the  space $(V(T_1), d_{l_1})$ and $(V(T_2), d_{l_2})$ respectively.  
\item[(ii)] The spaces $(V(T_1), d_{l_1})$ and $(V(T_2), d_{l_2})$ are isometric.
\end{enumerate}
\end{Conjecture}

We note that the implication $(ii) \Rightarrow (i)$ is evidently true  and the validity of $ (i) \Rightarrow (ii)$ follows from the strong triangle inequality if $T_1$ and $T_2$ contain at most three vertices.

The following problem naturally arises.

\begin{Problem}
Describe, up to isometry, all finite $\bf GH$-spaces generated by labeled trees.
\end{Problem}

It was shown in Theorem \ref{tteo5} that every locally finite connected tree with non-degenerate labeling generates an ultrametric space.

\begin{Conjecture}\label{rama} The following statements are equivalent for every connected graph $G$:
\begin{enumerate}
\item[(i)] The ultrametric space $(V(G), d_l)$ is discrete for every non-degenerate labeling
\(
l : V(G) \to \mathbb{R}^+.
\)
\item[(ii)] The graph $G$ is locally finite.
\end{enumerate}
\end{Conjecture}

Recall that an ultrametric space $(X,d)$ is called {\it discrete} if for every $p \in X$ there exists $\varepsilon > 0$ such that
\begin{equation*}
d(p,x) > \varepsilon
\end{equation*}
 for every $ x \in X \setminus \{p\}$.

We note that Conjecture \ref{rama} is valid if $G$ is a tree (see Corollary 5 in \cite{DK2022LTGCCaDUS}).

In Example \ref{hau} we describe a connected graph $H$ that is not locally finite but generates an ultrametric
for each non-degenerate labeling $l : V(H) \to \mathbb{R}^+$.

\begin{Problem} Describe, up to graph isomorphism, all connected graphs $G$ which generate ultrametric spaces for each non-degenerate labeling $l : V(G) \to \mathbb{R}^+$.
\end{Problem}

\begin{Conjecture}\label{pak} Let $G$ be a connected graph. Then the following statements are equivalent:
\begin{enumerate}
    \item[(i)] There exists a non-degenerate labeling $l : V(G) \to \mathbb{R}^+$ such that the pseudoultrametric space $(V(G), d_l)$ is not ultrametric.
\item[(ii)] There exist two distinct vertices $u, v \in V(G)$ and an infinite sequence of cycles $C_n \subseteq G$, $n \in \mathbb{N}$, such that
\(
u, v \in V(C_n) 
\)
for each $ n \in \mathbb{N}$,
\(
\{u,v\} \notin E(G),
\)
and the sets
\(
V(C_{n_1}) \setminus \{u,v\}\) and \(V(C_{n_2}) \setminus \{u,v\}
\)
are disjoint whenever $n_1 \neq n_2$.
\end{enumerate}
\end{Conjecture}

It should be noted here that arguing as in Example \ref{rdwwdd} one can easily prove the validity of the implication $(ii) \Rightarrow (i)$ of Conjecture \ref{pak}.

\section*{Declarations}

\subsection*{Funding} This study was funded by grant 367319 from the Research Council of Finland.

\subsection*{Conflict of interest/Competing interests} The authors declare no conflicts of interest.

\subsection*{Ethics approval and consent to participate} Not applicable

\subsection*{Consent for publication} Not applicable

\subsection*{Data availability} Not applicable

\subsection*{Materials availability} Not applicable

\subsection*{Code availability} Not applicable

\subsection*{Author contribution} Conceptualization, O.D.; writing -- original draft preparation, O.D.; methodology, O.D.; formal analysis, O.R.; writing -- review and editing, O.R.; investigation, O.D.; resources, O.R. All authors have read and agreed to the published version of the manuscript.

\bigskip






\bibliography{bib2020.07}

\end{document}